\documentclass{amsart}

\usepackage[left=1.2in,right=1.2in,top=1.2in,bottom=1.2in]{geometry}

\usepackage[utf8]{inputenc}
\usepackage[english]{babel}

\usepackage{graphicx}
\graphicspath{ {./images/} }
\usepackage{amsmath, lipsum}
\usepackage{amssymb}
\usepackage{amsthm}
\usepackage{framed}
\usepackage{mathtools}
\usepackage{xcolor}     % For color definitions
\usepackage[
colorlinks=true,
linkcolor=blue,     % Color for internal links (like table of contents)
citecolor=red,      % Color for citations
urlcolor=magenta    % Color for URLs
]{hyperref}
\usepackage{enumerate}
\usepackage{enumitem}
\usepackage{booktabs}
\usepackage{array}
\usepackage{faktor}
\usepackage{cancel}
\usepackage{makecell}
\usepackage{pbox}
\usepackage{bm}
\usepackage[toc,page]{appendix}
\usepackage{etoolbox}
\usepackage{stmaryrd}
\usepackage{extarrows}
\usepackage{blindtext}
\usepackage{comment}
\usepackage{centernot}
\usepackage{floatrow}
\allowdisplaybreaks[1]

\usepackage{tikz}
\usepackage{tikz-cd}
\usetikzlibrary{arrows.meta}
\usepackage[all,cmtip]{xy}

\newcommand{\got}{\mathfrak}
\newcommand{\cali}{\mathcal}

\newcommand{\eqdef}{\coloneqq}

\newcommand{\mto}{\mapsto}
\newcommand{\mbb}{\mathbb}

\newcommand{\fracddtz}{\frac{d}{dt}\bigg{|}_{t=0}}
\newcommand{\fracddtzs}{\frac{d}{dt}\big{|}_{t=0}}
\newcommand{\bigleftpar}{\big{(}}
\newcommand{\bigrightpar}{\big{)}}
\newcommand{\biggleftpar}{\bigg{(}}
\newcommand{\biggrightpar}{\bigg{)}}

\newcommand{\ttilde}{\widetilde}

\newcommand{\Wedge}{\bigwedge}

\makeatletter
\newcommand{\sectionnotoc}[1]{%
	\section*{#1}%
	\addtocontents{toc}{\protect\begingroup\protect\let\protect\contentsline\protect\@gobblethree\protect\endgroup}%
}
\makeatother

\makeatletter
\newcommand{\subsectionnotoc}[1]{%
	\subsection*{#1}%
	\addtocontents{toc}{%
		\protect\begingroup
		\protect\let\protect\contentsline\protect\@gobblethree
		\protect\endgroup
	}%
}
\makeatother

\DeclareMathOperator{\Ad}{Ad}
\DeclareMathOperator{\ad}{ad}

\DeclareMathOperator{\GL}{GL}
\DeclareMathOperator{\Gr}{Gr}
\DeclareMathOperator{\Hom}{Hom}

\DeclareMathOperator{\Aut}{Aut}
\DeclareMathOperator{\id}{id}

\DeclareMathOperator{\Id}{Id}

\DeclareMathOperator{\can}{can}

\DeclareMathOperator{\res}{res}

\DeclareMathOperator{\Inn}{Inn}

\theoremstyle{plain}
\newtheorem{thm}{Theorem}[section]

\theoremstyle{definition}
\newtheorem{defin}[thm]{Definition}

\newtheorem{lem}[thm]{Lemma}
\newtheorem{prop}[thm]{Proposition}

\newtheorem{rem}[thm]{Remark}

\title{On the Moser Trick for Lie Subalgebras and Foliations}
\author{Ilias Ermeidis}
\address{Department of Mathematics, Aristotle University of Thessaloniki, Thessaloniki 54124, Greece}
\email{ermeidis95@gmail.com}
\date{}
\keywords{Foliations; Lie subalgebras; Moser trick; trivial deformations.}
\thanks{Part of this work was carried out during the author’s PhD studies at the University of G\"ottingen, partially funded by a GSSP-DAAD fellowship and by RTG 2491 (G\"ottingen).}
\subjclass{Primary 53C12; Secondary 22E60.
}

\begin{document}

\begin{abstract}
Given a smooth deformation of a Lie subalgebra, we establish a necessary and sufficient condition for its smooth triviality and derive an analogous criterion for Lie ideals. We then give a direct proof of the Moser trick for foliations, which forms the basis for extending this result to general Lie subalgebroids.
\end{abstract}

\maketitle

%\medskip

%\tableofcontents 

\section{Introduction}

Lie algebroids constitute a central object of study in differential geometry and mathematical physics, as they encode infinitesimal symmetries across a broad class of geometric and physical settings. The two limiting cases of Lie algebroids are, on the one hand, Lie algebras and, on the other, tangent bundles. Accordingly, the corresponding extreme cases of Lie subalgebroids are Lie subalgebras and foliations, respectively. These corner cases are precisely the main ingredients of the present note. 

In the case of Lie subalgebras, the deformation cohomology was investigated by Richardson \cite{Richardson-Deformations-of-subalgebras-69'} while the $L_\infty$-algebra structure on the corresponding cochain complex was later established by Frégier–Zambon in \cite{Zambon-Fregier-SimultaneousDefOfAlgebraAndMorphisms-15'}. Furthermore, (topological) rigidity and stability results in the context of Lie subalgebras originally appeared in \cite{Richardson-A-Rigidity-theorem-for-subalgebras-of-Lie-and-associative-algebras-67', Richardson-Deformations-of-subalgebras-69'}, were reformulated, along with many other related and new results, in a modern differential-geometric language by Crainic–Schätz–Struchiner in \cite{Crainic-Ivan-Schaetz}.

In Section \ref{second section of the paper deformations of foliations}, we associate a family of cocycles to a smooth deformation of a Lie subalgebra and use this infinitesimal data to obtain a cohomological criterion for its triviality (Theorem \ref{Moser trick for Lie subalgebras}). We also prove an analogous result for Lie ideals (Theorem \ref{Moser trick for Lie ideals}). In contrast to the space of Lie algebra structures on a vector space, it is unknown whether the spaces of Lie subalgebras and Lie ideals of a Lie algebra are locally path-connected. Therefore, the smooth deformation-triviality results established in this paper cannot be recovered from the more topologically oriented approaches of \cite{Richardson-A-Rigidity-theorem-for-subalgebras-of-Lie-and-associative-algebras-67', Richardson-Deformations-of-subalgebras-69', Crainic-Ivan-Schaetz,Ermeidis-Jotz-Deformations-of-ideals-in-Lie-algebras-PREPRINT-2024'}. However, our techniques are substantially inspired by those of \cite{CardenasIvanStabilityofLiegrouphomomorphisms-2020}, where Cárdenas-Struchiner proved a Moser trick for Lie group homomorphisms and Lie subgroups. Namely, Proposition \ref{Moser lemma for Lie algebra morphisms} may be regarded as an infinitesimal analogue of \cite[Theorem 3.7]{CardenasIvanStabilityofLiegrouphomomorphisms-2020}.

Moving on to the case of foliations, their infinitesimal deformation theory was developed by Heitsch in \cite{Heitsch-A-cohomology-for-foliated-manifolds-1975}. The richer algebraic structure carried by the deformation complex of a foliation has been investigated by several authors using different, a priori independent techniques, including \cite{Huebschmann-Higher-homotopies-and-MC-algebras-the-foliation-paper-05'} by Huebschmann, \cite{Vitagliano-On-the-strong-homotopy-Lie-Rinehart-algebra-of-foliation-14'} by Vitagliano, and \cite{Xiang-Ji-Simultaneous-deformations-of-Lie-algebroid-and-subalgebroid-14'} by Ji, where the latter work presents the structure in the broader context of Lie subalgebroids.

Crainic-Moerdijk solved the deformation problem for Lie algebroids \cite{Crainic-Moerdijk-Deformations-08'} and developed the corresponding adapted Moser trick \cite[Theorem 2]{Crainic-Moerdijk-Deformations-08'}. Furthermore, the authors proved that there exists a quasi-isomorphism between the deformation complex of a foliation, viewed as a Lie subalgebroid of the tangent bundle, and the deformation complex of the foliation viewed as a Lie algebroid in its own right; this quasi-isomorphism also maps deformation cocycles to deformation cocycles \cite[Proposition 4]{Crainic-Moerdijk-Deformations-08'}. Together, these results imply that the Moser trick for foliations holds in the expected and most natural way.

The main aim of Section \ref{section 3 of the paper on deformations of foliations} is to provide a self-contained proof of the Moser trick for foliations (Theorem \ref{Moser Theorem for foliations}), expressed solely in terms of the deformation complex of the foliation, regarded as an involutive distribution. This approach furnishes a convenient framework for extending the result to arbitrary Lie subalgebroids \cite{Ermeidis-Batakidis}, where the aforementioned quasi-isomorphism generally fails to hold.

 \subsection*{Acknowledgement} Section \ref{section 3 of the paper on deformations of foliations} is largely based on Chapter 2 of the author's PhD thesis \cite{Ermeidis-PhD-thesis-2025}, completed under the supervision of Madeleine Jotz. The author is grateful to her for helpful comments on an earlier draft of this section and for a stimulating collaboration on deformations of Lie ideals, which led to \cite{Ermeidis-Jotz-Deformations-of-ideals-in-Lie-algebras-PREPRINT-2024'}. This work, in turn, inspired the author to prove Theorem \ref{Moser trick for Lie ideals}.

\section{Triviality of smooth deformations of Lie subalgebras and Lie ideals}\label{second section of the paper deformations of foliations}
%The $k$-Grassmannian of a Lie algebra $\got{g}$ is denoted by $\Gr_k(\got{g})$. Recall that the $k$-Grassmannian of an $n$-dimensional vector space $\got{g}$ is the connected, compact smooth manifold of dimension $k\cdot(n-k)$ whose points are $k$-dimensional vector subspaces of $\got{g}$, see e.g., \cite{Lee-Smooth-manifolds-2013}.
The Lie bracket of a finite dimensional real Lie algebra $\got{g}$ is denoted by $\mu_{\got{g}}:\Wedge^2\got{g}\to\got{g}$, and its adjoint representation by $\ad:\got{g}\to\got{gl}(\got{g})$. Furthermore, \(G\) denotes the unique simply connected Lie group integrating \(\mathfrak{g}\), and $\Ad:G\to\GL(\got{g})$ its adjoint representation. The Chevalley-Eilenberg complex of $\got{g}$ with values in a representation $r:\got{g}\to\got{gl}(V)$ is denoted by $C^{\bullet}(\got{g};V)\eqdef\Wedge^\bullet\got{g}^*\otimes V$, its differential by $\delta_{\got{g}}^V:C^\bullet(\got{g}; V)\to C^{\bullet+1}(\got{g};V)$, and its space of cocycles by $Z^\bullet(\got{g}; V)$. Given two Lie algebras $\mathfrak{h}$ and $\mathfrak{g}$, we denote by $\mathrm{Hom}(\mathfrak{h}, \mathfrak{g})$ the vector space of linear maps from $\mathfrak{h}$ to $\mathfrak{g}$. All smooth deformations appear in this section are parametrized by an open interval \(I \subset \mathbb{R}\) containing the origin. Throughout, a \emph{smooth family} $(s_t)_{t\in I}$ in a finite-dimensional vector space $S$ is a smooth curve $I \to S$, $t \mapsto s_t$, and will be denoted by $(s_t)_{t \in I} \subset S$.\\

The following list summarizes all the deformation-theoretic terminology required for the purposes of this section. 
\vspace*{+0.1cm}
\begin{enumerate}[label=(\Alph*).]
	\item A smooth deformation $(\got{g}_t)_{t\in I}\eqdef(\got{g},\mu_{\got{g}}^t)_{t\in I}$ of a Lie algebra $(\got{g},\mu_{\got{g}})$ is a smooth family $(\mu_{\got{g}}^t)_{t\in I}\subset\Hom(\wedge^2\got{g},\got{g})$ such that $\mu_{\got{g}}^0=\mu_{\got{g}}$ and, for each $t\in I$, $\mu_{\got{g}}^t:\wedge^2\got{g}\to\got{g}$ is a Lie bracket.
	The deformation is called trivial, if there exists a smooth family $(\psi_t)_{t\in I}\subset\GL(\got{g})$ such that $\psi_0=\id_{\got{g}}$ and $\psi_t:\got{g}\to(\got{g},\mu_{\got{g}}^t)$ is a Lie algebra isomorphism for all $t\in I$.\\ 
	For each $t\in I$, the differential of the Chevalley-Eilenberg complex $C^\bullet(\got{g}_t;\got{g}_t)$ with values in its adjoint representation, is denoted by $\delta_{\mu_{\got{g}}^t}$.
\vspace*{+0.2cm}
\item A smooth deformation $(\phi_t)_{t\in I}$ of a Lie algebra morphism $\phi:\got{h}\to\got{g}$ is a smooth family $(\phi_t)_{t\in I}\subset\Hom(\got{h},\got{g})$ such that $\phi_0=\phi$ and, for each $t\in I$, $\phi_t:\got{h}\to\got{g}$ is a Lie algebra morphism.\\
The deformation is called $\Inn(\got{g})$-trivial if there exists a smooth family $(g_t)_{t\in I}\subset G$ such that $g_0=1_G$ and $\phi_t=\Ad_{g_t}\circ\phi$ for all $t\in I$. More generally, it is called $\Aut(\got{g})$-trivial if there exists a smooth family $(\Psi_t)_{t\in I}\subset\Aut(\got{g})$ such that $\Psi_0=\id_{\got{g}}$ and $\phi_t=\Psi_t\circ\phi$ for all $t\in I$.\\
For each $t\in I$, we denote by $C^{\bullet}_{\phi_t}(\got{h};\got{g})$, the Chevalley-Eilenberg complex of $\got{h}$ with values in the representation $(\phi_t^*\circ\ad):\got{h}\to\got{gl}(\got{g})$, defined by
\begin{equation*}
(\phi_t^*\circ\ad)_u(v)\eqdef\ad_{\phi_t(u)}(v), \ \ \text{for all} \ u,v\in\got{h}.
\end{equation*}
Its differential is denoted by $\delta_{\phi_t}$.
\vspace*{+0.2cm}
\item Let $\iota:(\got{h},\mu_{\got{h}})\to\got{g}$ be an injective Lie algebra morphism. A smooth deformation $(\got{h}_t,\iota_t)_{t\in I}$ of the Lie subalgebra $(\got{h},\iota)$ consists of a smooth deformation $(\got{h}_t)_{t\in I}\eqdef(\got{h},\mu_{\got{h}}^t)_{t\in I}$ of the Lie algebra $(\got{h},\mu_{\got{h}})$, together with a smooth family $(\iota_t)_{t\in I}\subset\Hom(\got{h},\got{g})$ such that $\iota_0=\iota$ and $\iota_t:\got{h}_t\to\got{g}$ is an injective Lie algebra morphism for all $t\in I$.\footnote{For the equivalence between this definition of a Lie subalgebra and the one in terms of a curve in the Grassmannian, we refer the reader to Appendix~\ref{appendix on curves in Grassmannian}.}\\
The deformation is called $\Aut(\got{g})$-trivial if there exist smooth families
$(\psi_t)_{t\in I}\subset\GL(\got{h})$ and $(\Psi_t)_{t\in I}\subset\Aut(\got{g})$ such that
$\psi_0=\id_{\got{h}}$, $\Psi_0=\id_{\got{g}}$, and, for each $t\in I$, the map
$\psi_t:\got{h}\to(\got{h},\mu_{\got{h}}^t)$ is a Lie algebra isomorphism satisfying
$\iota_t\circ\psi_t=\Psi_t\circ\iota$.\\
For each $t\in I$, we denote by $C^{\bullet}_{\iota_t}\bigleftpar\got{h}_t;\got{g}/{\iota_t(\got{h}_t)}\bigrightpar$, the Chevalley-Eilenberg complex of $\got{h}_t$ with values in the representation $	(\iota_t^*\circ\ad)^{\got{h}_t}:\got{h}_{t}\to\got{gl}\bigleftpar\got{g}/{\iota_t(\got{h}_t)}\bigrightpar$, defined by
\begin{equation*}
(\iota_t^*\circ\ad)^{\got{h}_t}_{u}\bigleftpar\pi_{\can}^{t}(x)\bigrightpar\eqdef\pi_{\can}^t\circ\ad_{\iota_t(u)}(x), \ \text{for all} \ u\in\got{h} \ \text{and} \ x\in\got{g},
\end{equation*}
where $\pi_{\can}^t:\got{g}\to{\got{g}/{\iota_t(\got{h}_t)}}$ is the canonical projection. Its differential is denoted by $\overline{\delta}_{\iota_t}$.
\vspace*{+0.2cm}
\item 	Let
\[
0 \longrightarrow \mathfrak{h} \stackrel{\iota}{\longrightarrow}\mathfrak{g} \stackrel{\pi}{\longrightarrow} \mathfrak{k} \longrightarrow 0
\]
be a short exact sequence of Lie algebras. A smooth deformation $(\got{h}_t,\iota_t,\got{\pi}_t,\got{k}_t)_{t\in I}$ of the Lie ideal $(\got{h},\iota,\pi,\got{k})$ consists of:

\begin{itemize}
	\item a smooth deformation \((\mathfrak{h}_t, \iota_t)_{t \in I}\) of the Lie subalgebra \((\mathfrak{h}, \iota)\),
	\item a smooth deformation \(\mathfrak{k}_t := (\mathfrak{k}, \mu_{\mathfrak{k}}^t)\) of the Lie algebra \((\mathfrak{k}, \mu_{\mathfrak{k}})\), and
	\item a smooth family \((\pi_t)_{t\in I} \subset \mathrm{Hom}(\mathfrak{g}, \mathfrak{k})\),
\end{itemize}
such that \(\pi_0 = \pi\) and, for each \(t \in I\), the following hold:
\begin{enumerate}
	\item $\pi_t : \mathfrak{g} \to \mathfrak{k}_t \ \ \text{is a surjective Lie algebra morphism}$, and
	\item $\iota_t(\mathfrak{h}_t) = \ker(\pi_t)$.
\end{enumerate}
The deformation is called $\Aut(\got{g})$-trivial if there exists a smooth family
\[
(\alpha_t,\beta_t,\gamma_t)_{t\in I}
\subset
\GL(\got{h})\times\Aut(\got{g})\times\GL(\got{k}),
\]
such that $\alpha_t:\got{h}\to\got{h}_t$ and $\gamma_t:\got{k}\to\got{k}_t$ are Lie algebra isomorphisms for all
$t\in I$, and the diagram
\[
\begin{tikzcd}
	0 \arrow[r] &
	\got{h} \arrow[r, "\iota"] \arrow[d, "\alpha_t"] &
	\got{g} \arrow[r, "\pi"] \arrow[d, "\beta_t"] &
	\got{k} \arrow[r] \arrow[d, "\gamma_t"] &
	0 \\
	0 \arrow[r] &
	\got{h}_t \arrow[r, "\iota_t"] &
	\got{g} \arrow[r, "\pi_t"] &
	\got{k}_t \arrow[r] &
	0
\end{tikzcd}
\]
commutes for all $t\in I$, with
\[
\alpha_0=\id_{\got{h}},\qquad
\beta_0=\id_{\got{g}},\qquad
\gamma_0=\id_{\got{k}}.
\]
For each $t\in I$, we denote by $C_{\pi_t}^\bullet\bigleftpar\got{g};\Hom(\got{h}_t,\got{k}_t)\bigrightpar$, the Chevalley--Eilenberg complex of $\got{g}$ with values in the induced $\Hom$-representation arising from the representations
\begin{align*}
\hspace*{+1cm}&\ad^{\got{h}_t}:\got{g}\to\got{gl}(\ker(\pi_t)), \quad x\mto\ad^{\got{h}_t}_x, \ \ \text{where} \ \ \ad^{\got{h}_t}_x(u)\eqdef\mu_{\got{g}}(x,\iota_t(u)), \ \ \text{for all} \ x\in\got{g}, \ u\in\got{h}, \ \text{and}\\
&\ad^{\got{k}_t}:\got{g}\to\got{gl}(\got{k}_t), \qquad \ \ \ \ x\mto\ad^{\got{k}_t}_x, \ \ \ \text{where} \ \ \ad^{\got{k}_t}_x\bigleftpar\pi_t(y)\bigrightpar\eqdef\pi_t\circ\mu_{\got{g}}(x,y), \ \ \text{for all} \ x\in\got{g}, \ y\in\got{g}.
\end{align*}
Its differential is denoted by $\overline{\delta}_{\pi_t}$.
\end{enumerate}

\begin{prop}\label{Moser lemma for Lie algebra morphisms}
For any Lie algebra morphism $\phi:\got{h}\to\got{g}$ and any smooth deformation $(\phi_t)_{t\in I}$ of $\phi$, the following statements hold.
	\begin{enumerate}[label=(\roman*)]
\item For each $t\in I$, the cochain
\[
\dot{\phi}_t \eqdef \frac{d}{dt}\phi_t \in C_{\phi_t}^{1}(\got{h};\got{g})
\]
is a cocycle.\vspace*{+0.1cm}

\item If $(\phi_t)_{t\in I}$ is $\Inn(\got{g})$-trivial, then there exists a smooth family
$(x_t)_{t\in I}\subset\got{g}$ such that
\[
\dot{\phi}_t=\delta_{\phi_t}(x_t)
\]
for all $t\in I$. Conversely, the existence of such a family implies that $(\phi_t)_{t\in I}$ is
trivial after possibly shrinking $I$.\vspace*{+0.1cm}

\item If $(\phi_t)_{t\in I}$ is $\Aut(\got{g})$-trivial, then there exist smooth
families $(X_t)_{t\in I}\subset Z^1(\got{g};\got{g})$ and
$(x_t)_{t\in I}\subset\got{g}$ such that
\[
\dot{\phi}_t=\phi_t^*(X_t)+\delta_{\phi_t}(x_t)
\]
for all $t\in I$. Conversely, the existence of such families implies that $(\phi_t)_{t\in I}$ is
$\Aut(\got{g})$-trivial after possibly shrinking $I$.
	\end{enumerate}
\end{prop}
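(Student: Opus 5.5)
We treat the three parts separately, in the spirit of the Moser argument of \cite[Theorem 3.7]{CardenasIvanStabilityofLiegrouphomomorphisms-2020}.

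\textbf{Part (i).} We differentiate the morphism identity $\phi_t(\mu_{\got{h}}(u,v))=\mu_{\got{g}}(\phi_t(u),\phi_t(v))$ in $t$. This gives
\[
\dot\phi_t(\mu_{\got{h}}(u,v))=\mu_{\got{g}}(\dot\phi_t(u),\phi_t(v))+\mu_{\got{g}}(\phi_t(u),\dot\phi_t(v)).
\]
This says exactly that $\delta_{\phi_t}(\dot\phi_t)(u,v)=\ad_{\phi_t(u)}\dot\phi_t(v)-\ad_{\phi_t(v)}\dot\phi_t(u)-\dot\phi_t(\mu_{\got{h}}(u,v))$ vanishes. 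Here we use the sign convention $\delta_{\phi_t}(x)(u)=\ad_{\phi_t(u)}x=-\ad_x\phi_t(u)$ on $0$-cochains.

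\textbf{Part (ii), forward direction.} Suppose $\phi_t=\Ad_{g_t}\circ\phi$. We set $x_t\eqdef -\,dR_{g_t^{-1}}(\dot g_t)\in\got{g}$, the right-logarithmic derivative with whatever sign matches the convention above. Then $\frac{d}{dt}\Ad_{g_t}=\ad_{\dot g_tg_t^{-1}}\circ\Ad_{g_t}$. Hence
\[
\dot\phi_t=\ad_{\dot g_tg_t^{-1}}\phi_t=-\ad_{\phi_t(\cdot)}(\dot g_tg_t^{-1})=\delta_{\phi_t}(x_t).
\]

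\textbf{Part (ii), converse.} Given $x_t$, we solve the time-dependent ODE $\dot g_t=-\,dR_{g_t}(x_t)$ with $g_0=1_G$ on $G$. This is a right-invariant time-dependent vector field, so the solution in fact exists on all of $I$; shrinking is harmless anyway. We then set $\tilde\phi_t\eqdef\Ad_{g_t^{-1}}\circ\phi_t$ and differentiate:
\[
\frac{d}{dt}\tilde\phi_t=\Ad_{g_t^{-1}}\bigl(-\ad_{\dot g_tg_t^{-1}}\phi_t+\dot\phi_t\bigr)=\Ad_{g_t^{-1}}\bigl(-\delta_{\phi_t}(x_t)+\dot\phi_t\bigr)=0.
\]
So $\tilde\phi_t=\phi$, i.e.\ $\phi_t=\Ad_{g_t}\circ\phi$. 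The main bookkeeping point is to fix the sign of $x_t$ consistently in both directions.

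\textbf{Part (iii), forward direction.} Suppose $\phi_t=\Psi_t\circ\phi$. We take $X_t\eqdef\dot\Psi_t\Psi_t^{-1}$. Differentiating the automorphism condition shows $X_t$ is a derivation, i.e.\ $X_t\in Z^1(\got{g};\got{g})$. Moreover $\dot\phi_t=X_t\circ\phi_t=\phi_t^*(X_t)$, so we may take $x_t=0$.

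\textbf{Part (iii), converse — the main step.} Given $X_t$ and $x_t$, we solve the linear ODE $\dot\Psi_t=(X_t+\ad_{x_t})\Psi_t$ with $\Psi_0=\id$ in $\GL(\got{g})$. Its solution exists on all of $I$ by linearity. Since $X_t+\ad_{x_t}$ is a derivation for each $t$, the flow stays in $\Aut(\got{g})$. To see this, note that $F_t(a,b)\eqdef\Psi_t\mu_{\got{g}}(a,b)-\mu_{\got{g}}(\Psi_ta,\Psi_tb)$ satisfies the linear ODE $\dot F_t=D_tF_t$ with $F_0=0$, where $D_t\eqdef X_t+\ad_{x_t}$; hence $F_t=0$.

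We then compare $\tilde\phi_t\eqdef\Psi_t^{-1}\circ\phi_t$ with $\phi$. We have
\[
\dot{\tilde\phi}_t=\Psi_t^{-1}\bigl(\dot\phi_t-X_t\phi_t-\ad_{x_t}\phi_t\bigr).
\]
Using $\dot\phi_t=X_t\phi_t+\delta_{\phi_t}(x_t)$ and $\delta_{\phi_t}(x_t)=\ad_{\phi_t(\cdot)}x_t=-\ad_{x_t}\phi_t$, the bracket becomes $-2\ad_{x_t}\phi_t$, which is not zero. So the generator must instead be $D_t=X_t-\ad_{x_t}$, again a derivation, chosen with the sign that makes it cancel. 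With that choice $\dot{\tilde\phi}_t=0$, so $\tilde\phi_t=\phi$ and $\phi_t=\Psi_t\circ\phi$.

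The only genuine obstacle is the sign convention for $\delta_{\phi_t}$ on $0$-cochains, which must be fixed once and used uniformly in (ii) and (iii). Apart from that, the ODEs are linear or right-invariant, so the solutions exist globally and no shrinking of $I$ is really needed.
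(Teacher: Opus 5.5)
Your proof is correct. Part (i) and the forward directions of (ii) and (iii) coincide with the paper's. In (ii) you are more careful: the paper's ``$x_t\eqdef\frac{d}{dt}g_t$'' should really be the right-logarithmic derivative $\dot g_tg_t^{-1}$.

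In the converse of (ii), the paper shows that $t\mapsto\phi_t(u)$ and $t\mapsto\Ad_{g_t}\phi(u)$ solve the same linear equation on $\got{g}$. You instead show that $\Ad_{g_t^{-1}}\circ\phi_t$ has zero derivative. This is the same uniqueness argument. Your remark that a time-dependent right-invariant field on $G$ integrates on all of $I$ is also correct (uniform local existence near $1_G$ plus right translation), and it removes the shrinking of $I$.

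The genuine difference is the converse of (iii). The paper proceeds in stages:
\begin{itemize}
\item it integrates $X_t$ alone to $\Phi_X^t\in\Aut(\got{g})$;
\item it passes to $\phi'_t=(\Phi_X^t)^{-1}\circ\phi_t$ and uses that $\Phi_X^t$ intertwines $\delta_{\phi'_t}$ with $\delta_{\phi_t}$, giving $\dot\phi'_t=\delta_{\phi'_t}\bigl((\Phi_X^t)^{-1}x_t\bigr)$;
\item it invokes (ii), so that $\Psi_t=\Phi_X^t\circ\Ad_{g_t}$.
\end{itemize}
You instead absorb the coboundary into the derivation. Since $\delta_{\phi_t}(x_t)=\mp\ad_{x_t}\circ\phi_t$, the hypothesis reads $\dot\phi_t=D_t\circ\phi_t$ with $D_t=X_t\mp\ad_{x_t}\in Z^1(\got{g};\got{g})$, and a single linear ODE in $\GL(\got{g})$ finishes the argument.

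Your route is shorter and never uses $G$ or part (ii). It gives triviality on all of $I$. It also shows that the hypothesis of (iii) amounts to ``$\dot\phi_t=\phi_t^*(D_t)$ for a smooth family of derivations $D_t$''. Your linear ODE for $F_t$ also makes precise the paper's ``both sides have the same derivative''. The paper's route yields finer information: the trivializing automorphisms factor as the flow of $X_t$ followed by an inner automorphism generated by $x_t$, in the spirit of the group-level argument of C\'ardenas--Struchiner.

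When writing it up, drop the false start. With your convention $\delta_{\phi_t}(x)(u)=[\phi_t(u),x]$, the generator is $X_t-\ad_{x_t}$. With the convention the paper actually uses in (ii), $\delta_{\phi_t}(x)=\ad_x\circ\phi_t$, it would be $X_t+\ad_{x_t}$. The statement is existential in $x_t$, so the sign is immaterial.
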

\begin{proof}
	(i) By assumption, the following equation holds for all $t\in I$, and $u,v\in\got{h}$.
	\begin{equation*}
		\phi_t\bigleftpar\mu_{\got{h}}(u,v)\bigrightpar=\mu_{\got{g}}\bigleftpar\phi_t(u),\phi_t(v)\bigrightpar.
	\end{equation*}
	Differentiating with respect to $t$ both sides of the equation, we obtain:
	\begin{align*}
		\dot{\phi}_t\bigleftpar\mu_{\got{h}}(u,v)\bigrightpar&=\mu_{\got{g}}\bigleftpar\dot{\phi}_t(u),\phi_t(v)\bigrightpar+\mu_{\got{g}}\bigleftpar\phi_t(u),\dot{\phi}_t(v)\bigrightpar\Leftrightarrow\\
		\dot{\phi}_t\bigleftpar\mu_{\got{h}}(u,v)\bigrightpar&=-(\phi_t^*\circ\ad)_{v}(\dot{\phi}(u))+(\phi_t^*\circ\ad)_{u}(\dot{\phi}_t(v)).
	\end{align*}
	The last equation is equivalent to the condition $\delta_{\phi_t}(\dot{\phi}_t)=0$.\\
	
	(ii) Assuming that $(\phi_t)_{t\in I}$ is an $\Inn(\got{g})$-trivial deformation of $\phi:\got{h}\to\got{g}$, means that
	\begin{equation*}
		\phi_t=\Ad_{g_t}\circ\phi, \quad \text{for all} \ t\in I,
	\end{equation*}
	where $(g_t)_{t\in I}$ is a smooth curve on the Lie group $G$ such that $g_0=1_G$. As before, by differentiating the above equation with respect to $t$, we conclude that:
	\begin{align*}
		\dot{\phi}_t=\ad_{x_t}\circ(\Ad_{g_t}\circ\phi), \quad \text{where} \ \ x_t\eqdef\frac{d}{dt}g_t.
	\end{align*}
	The above equation can be rewritten as$$\dot{\phi}_t=\ad_{x_t}\circ\phi_t=-(\phi_t^*\circ\ad)_{(\cdot)}(x_t),$$which is equivalent to the condition $\dot{\phi}_t=\delta_{\phi_t}(x_t)$.\\
	
	Conversely, we now assume that
	\begin{equation*}
		\dot{\phi}_t=\delta_{\phi_t}(x_t) \quad \text{for all} \ \ t\in I.
	\end{equation*}
Consider the time-dependent vector field\footnote{A brief overview of time-dependent vector fields sufficient for the purposes of this note can be found in Appendix~\ref{appendix}.} on the Lie group $G$
$$
X:I\times G\to TG, \quad (t,\cdot)\mapsto X_t\eqdef X(t,\cdot), \quad \text{defined by} \  X_t\eqdef\overrightarrow{x_t},\quad \text{for all } t\in I,
$$
where $\overrightarrow{x_t}$ is the unique right-invariant vector field on $G$ associated with $x_t\in\got{g}$, i.e. $\overrightarrow{x_t}(1_G)=x_t.$\\
By the local existence theorem for flows, there exists $t_0>0$ such that the flow $\Phi_X^t\eqdef\Phi^{t,0}_X$ of $X$ is defined at $1_G\in G$ for all $t\in I_0\eqdef(-t_0,t_0)\subset I$. The integral curve of $X$, from time $0$ to time $t$, starting at $1_G\in G$, is then defined by
$$
(g_t)_{t\in I_0}\eqdef\Phi_X^t(1_G).
$$
We claim that $\phi_t=\Ad_{g_t}\circ\phi$ for small enough values of $t$. To prove this, we first consider the time-dependent vector field on $\got{g}$,$$Z:I_0\times\got{g}\to\got{g}, \quad (t,\cdot)\mto Z_t\eqdef Z(t,\cdot), \quad \text{defined by} \ Z_t\eqdef-\ad_{(\cdot)}(x_t).$$On the one hand, we observe, due to our initial assumption i.e. $\dot{\phi}_t=\delta_{\phi_t}(x_t)=-\ad_{\phi_t(\cdot)}(x_t)$, that the smooth family $(\dot{\phi}_t)_{t\in I_0}\subset\Hom(\got{h},\got{g})$ is equal, at each time $t$, to the pullback of $Z_t$ by $\phi_t$. On the other hand, the smooth family of cocycles associated with the trivial deformation $(\Ad_{g_t}\circ \phi)_{t\in I}$ of $\phi$, as follows from the proof of the direct impication above, is defined at each time $t\in I_0$, by:
\begin{equation*}
	Y_t\eqdef\delta_{\Ad_{g_t}\circ\phi}(x_t).
\end{equation*}
That means, the time-dependent vector field$$Y:I_0\times\got{g}\to\got{g}, \quad (t,\cdot)\mto Y(t,\cdot)\eqdef Y_t,$$coincides, at each time $t$, with the pullback of $Z_t$ by $(\Ad_{g_t}\circ\phi)$. Therefore, for any $u\in\got{h}$, the curves $t\mto\phi_t(u)$ and $t\mto\Ad_{g_t}\bigleftpar\phi(u)\bigrightpar$ are both integral curves of the time-dependent vector field $Z$ passing at the time equals $0$ through the point $\phi(u)\in\got{g}$, and so they must coincide for all small values of $t$.\\

\emph{(iii)} Assuming that $(\phi_t)_{t\in I}$ is an  $\Aut(\got{g})$-trivial deformation of $\phi$, means that 
\begin{equation*}
	\phi_t=\Psi_t\circ\phi, \quad \text{for all} \ t\in I,
\end{equation*}
where $(\Psi_t)_{t\in I}\subset\Aut(\got{g})$ such that $\Psi_0=\id_{\got{h}}$. Let $(X_t)_{t\in I}\subset\Hom(\got{g};\got{g})$ be the time-dependent vector field defined for each $t\in I$ by
\[
X_t\circ\Psi_t=\frac{d}{dt}\Psi_t.
\]
Since $(\Psi_t)_{t\in I}\subset\Aut(\got{g})$, it follows that $(X_t)_{t\in I}\subset Z^1(\got{g};\got{g})$. Differentiating now the equation $\phi_t=\Psi_t\circ\phi$ with respect to $t$, we obtain
\[
\dot{\phi}_t=X_t\circ\phi_t=\phi_t^*(X_t),
\]
for all $t\in I$, which proves the direct implication.\\

Conversely, we now assume that
\begin{equation}\label{in the Aut(g)-moser for Lie alg morph}
\dot{\phi}_t=\phi_t^*(X_t)+\delta_{\phi_t}(x_t) \quad \text{for all} \ \ t\in I,
\end{equation}
where $(X_t)_{t\in I}\subset Z^1(\got{g};\got{g})$ and $(x_t)_{t\in I}\subset\got{g}$ are both smooth families. We consider the flow $\Phi_X^t\eqdef\Phi_X^{t,0}$ from time $0$ to time $t$ of the time-dependent vector field
\begin{equation*}
	X:I\to \Hom(\got{g};\got{g}), \quad t\mto X(t)\coloneqq X_t,
\end{equation*}
determined uniquely be the equations
\begin{equation*}
	\frac{d}{dt}\Phi^t_X=X_t\circ\Phi_X^t, \quad \text{and} \quad \Phi_X^t=\id_{\got{g}}.
\end{equation*}
Since $(X_t)_{t\in I}\subset Z^1(\got{g};\got{g})$, it follows that
$(\Phi_X^t)_{t\in I}\subset\Aut(\got{g})$. This can be checked directly by first
noting that the equation
\begin{equation}\label{in the Aut(g)-moser trick for Lie algebra morphisms}
	\Phi_X^t\bigl(\mu_{\got{g}}(x,y)\bigr)
	=
	\mu_{\got{g}}\bigl(\Phi_X^t(x),\Phi_X^t(y)\bigr),
\end{equation}
holds for $t=0$ and all $x,y\in\got{g}$. Furthermore, differentiating the above
equation with respect to $t$, one sees that both sides have the same derivative;
hence \eqref{in the Aut(g)-moser trick for Lie algebra morphisms} holds for all
$t\in I$ and all $x,y\in\got{g}$.\\
Next and final step is to show that the Lie algebra morphism $\phi'_t\eqdef(\Phi_X^t)^{-1}\circ\phi_t$ is an $\Inn(\got{g})$-trivial deformation of $\phi$, for sufficiently small $t\in I$. Define $\widehat{x_t}\eqdef(\Phi_X^t)^{-1}(x_t)$, and rewrite equation \eqref{in the Aut(g)-moser for Lie alg morph}, as follows
\begin{equation*}
	\dot{\phi}_t=\phi_t^*(X_t)+\delta_{\phi_t}\bigleftpar\Phi_X^t(\widehat{x_t})\bigrightpar,
\end{equation*}
which is equivalent to
\begin{equation*}
	\dot{\phi}_t=\phi_t^*(X_t)+\Phi_X^t\bigleftpar\delta_{\phi'_t}(\widehat{x_t})\bigrightpar.
\end{equation*}
Moreover, differentiating both sides of the equation $\phi'_t=(\Phi_X^t)^{-1}\circ\phi_t$ with respect to $t$, we obtain
\begin{align*}
	\dot{\phi}'_t&=\biggleftpar\frac{d}{dt}(\Phi_X^t)^{-1}\biggrightpar\circ\phi_t+(\Phi_X^t)^{-1}\circ\dot{\phi}_t\\
	&=-(\Phi_X^t)^{-1}\circ\biggleftpar\frac{d}{dt}\Phi_X^t\biggrightpar\circ(\Phi_X^t)^{-1}\circ\phi_t+(\Phi_X^t)^{-1}\circ\dot{\phi}_t\\
	&=-(\Phi_X^t)^{-1}\circ X_t\circ\phi_t+(\Phi_X^t)^{-1}\circ\dot{\phi}_t\\
	&=\cancel{-(\Phi_X^t)^{-1}\circ \phi_t^*X_t}+\cancel{(\Phi_X^t)^{-1}\circ\phi_t^*X_t}+(\Phi_X^t)^{-1}\circ\Phi_X^t\bigleftpar\delta_{\phi'_t}(\widehat{x_t})\bigrightpar\\
	&=\delta_{\phi'_t}(\widehat{x_t}).
\end{align*}
Therefore, by part (ii), there exists a smooth family $(g_t)_{t\in I}\subset G$ with $g_0=1_G$, defined for sufficiently small $t\in I$, such that $(\Phi_X^t)^{-1}\circ\phi_t=\Ad_{g_t}\circ\phi$. Hence, $\phi_t = (\Phi_X^t \circ \Ad_{g_t})\circ \phi$, which completes the proof.
\end{proof}

\begin{thm}\label{Moser trick for Lie subalgebras}
	For any injective Lie algebra morphism $\iota:(\got{h},\mu_{\got{h}})\hookrightarrow\got{g}$ and any smooth deformation $(\got{h}_t,\iota_t)_{t\in I}$ of the Lie subalgebra $(\got{h},\iota)$, the following statements hold.
		\begin{enumerate}[label=(\roman*)]
			\item For each $t\in I$, the cochain
			\begin{equation*}
				\pi_{\can}^t\circ\dot{\iota}_t\in C^{\bullet}_{\iota_t}\bigleftpar\got{h}_t;\got{g}/{\iota_t(\got{h}_t)}\bigrightpar
			\end{equation*}
			is a cocycle.\vspace*{+0.1cm}
			\item If $(\got{h}_t,\iota_t)_{t\in I}$ is $\Aut(\got{g})$-trivial, then there exists a smooth family $(X_t)_{t\in I}\subset Z^1(\got{g};\got{g})$ such that
			\begin{equation*}
				\pi_{\can}^t\circ\dot{\iota}_t=\pi_{\can}^t\circ X_t\circ\iota_t.
			\end{equation*}
			Conversely, the existence of such a family implies that $(\got{h}_t,\iota_t)_{t\in I}$ is $\Aut(\got{g})$-trivial after possibly shrinking $I$.
		\end{enumerate}
\end{thm}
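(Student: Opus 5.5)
For part (i), I will differentiate the morphism identity $\iota_t(\mu_{\got{h}}^t(u,v))=\mu_{\got{g}}(\iota_t(u),\iota_t(v))$ in $t$. This gives
\[
\dot\iota_t(\mu^t_{\got{h}}(u,v))+\iota_t(\dot\mu^t_{\got{h}}(u,v))=\mu_{\got{g}}(\dot\iota_t(u),\iota_t(v))+\mu_{\got{g}}(\iota_t(u),\dot\iota_t(v)).
\]
Applying $\pi_{\can}^t$ kills the term $\iota_t(\dot\mu^t_{\got{h}}(u,v))$. Using the definition of $(\iota_t^*\circ\ad)^{\got{h}_t}$, what remains is exactly $\overline{\delta}_{\iota_t}(\pi^t_{\can}\circ\dot\iota_t)(u,v)=0$, as in Proposition \ref{Moser lemma for Lie algebra morphisms}(i).

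For the direct implication of (ii), I start from $\iota_t\circ\psi_t=\Psi_t\circ\iota$ and set $X_t\eqdef\dot\Psi_t\circ\Psi_t^{-1}\in Z^1(\got{g};\got{g})$; it is a derivation because $\Psi_t$ is a curve of automorphisms. Differentiating gives
\[
\dot\iota_t\circ\psi_t+\iota_t\circ\dot\psi_t=X_t\circ\Psi_t\circ\iota=X_t\circ\iota_t\circ\psi_t .
\]
I then compose on the right with $\psi_t^{-1}$ and apply $\pi_{\can}^t$, which removes $\iota_t(\dot\psi_t\psi_t^{-1})$. This yields $\pi^t_{\can}\circ\dot\iota_t=\pi^t_{\can}\circ X_t\circ\iota_t$.

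For the converse, I first integrate $X_t$ to a curve $\Phi_X^t\subset\Aut(\got{g})$, exactly as in Proposition \ref{Moser lemma for Lie algebra morphisms}(iii). The hypothesis says that $\dot\iota_t-X_t\circ\iota_t$ takes values in $\iota_t(\got{h})$. Since $\iota_t$ is injective, there is a unique $A_t\in\got{gl}(\got{h})$ with $\dot\iota_t=X_t\circ\iota_t+\iota_t\circ A_t$. I expect this to be the main point needing care: $A_t$ must depend smoothly on $t$. To see this, choose a smooth family of left inverses $\iota_t^{-1}$ (for instance via an inner product, $(\iota_t^T\iota_t)^{-1}\iota_t^T$); then $A_t=\iota_t^{-1}(\dot\iota_t-X_t\iota_t)$ is smooth. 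Next I define $\psi_t\in\GL(\got{h})$ as the solution of the linear ODE $\dot\psi_t=-A_t\circ\psi_t$ with $\psi_0=\id_{\got{h}}$.

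I then check that $\iota_t\circ\psi_t=\Phi^t_X\circ\iota$. Both sides equal $\iota$ at $t=0$, so it suffices to compare the ODEs they satisfy. For $c_t\eqdef(\Phi^t_X)^{-1}\circ\iota_t\circ\psi_t$ I compute
\[
\dot c_t=(\Phi_X^t)^{-1}\bigl(-X_t\iota_t\psi_t+X_t\iota_t\psi_t+\iota_tA_t\psi_t-\iota_tA_t\psi_t\bigr)=0 .
\]
Hence $c_t=\iota$, that is, $\iota_t\circ\psi_t=\Phi_X^t\circ\iota$.

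Finally, $\psi_t$ must be a Lie algebra isomorphism $\got{h}\to\got{h}_t$. This follows from injectivity of $\iota_t$ and the fact that $\Phi^t_X\circ\iota$ and $\iota_t$ are Lie algebra morphisms:
\[
\iota_t\bigl(\psi_t\mu_{\got{h}}(u,v)\bigr)=\Phi_X^t\iota\mu_{\got{h}}(u,v)=\mu_{\got{g}}(\iota_t\psi_tu,\iota_t\psi_tv)=\iota_t\bigl(\mu^t_{\got{h}}(\psi_tu,\psi_tv)\bigr).
\]
So $(\psi_t,\Phi_X^t)$ witnesses $\Aut(\got{g})$-triviality. All the ODEs involved are linear, so they are solvable on the whole of $I$, and no shrinking should actually be needed.
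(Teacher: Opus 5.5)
Your proof is correct. Part (i) and the direct implication of (ii) match the paper's argument; the converse uses the same objects but is organized in the opposite order.

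\textbf{How the paper does the converse.} It sets $Y_t=-A_t$ and takes the same linear flow $\Phi_Y^t$ (your $\psi_t$). It first proves that $\Phi_Y^t:\got{h}\to(\got{h},\mu_{\got{h}}^t)$ is a Lie algebra isomorphism. To do so, it applies $\delta_{\iota_t}$ to $\iota_t\circ Y_t=X_t\circ\iota_t-\dot\iota_t$, using that $X_t$ is a derivation and that $\delta_{\iota_t}(\dot\iota_t)=\iota_t\circ\frac{d}{dt}\mu^t_{\got{h}}$. This yields $\frac{d}{dt}\mu_{\got{h}}^t=-\delta_{\mu_{\got{h}}^t}(Y_t)$. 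It then checks that both sides of $\Phi_Y^t\mu_{\got{h}}(u,v)=\mu^t_{\got{h}}(\Phi_Y^tu,\Phi_Y^tv)$ solve $\dot z=Y_t z$. Only afterwards does it note that $\iota_t\circ\Phi_Y^t$ is a deformation of the morphism $\iota$ with derivative $X_t\circ(\iota_t\circ\Phi_Y^t)$. It then invokes Proposition~\ref{Moser lemma for Lie algebra morphisms}(iii) to get $\iota_t\circ\Phi_Y^t=\Psi_t\circ\iota$.

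\textbf{How your route differs.} You first establish $\iota_t\circ\psi_t=\Phi_X^t\circ\iota$ by a purely linear ODE computation. This is exactly the $x_t=0$ case of the computation in Proposition~\ref{Moser lemma for Lie algebra morphisms}(iii), and it does not need $\psi_t$ to be a morphism. The morphism property of $\psi_t$ then follows from injectivity of $\iota_t$ together with $\Phi_X^t\in\Aut(\got{g})$.

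\textbf{What each approach buys.} Your route bypasses the entire $\delta_{\mu^t_{\got{h}}}$ computation. The paper's route, in exchange, makes explicit the infinitesimal statement that the bracket deformation $(\mu^t_{\got{h}})_{t\in I}$ is generated by $Y_t$. It also reduces the statement cleanly to the morphism case.

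\textbf{Points the paper leaves implicit.} Your argument supplies two things the paper only asserts. First, you justify that $A_t$ (the paper's $-Y_t$) depends smoothly on $t$, via the left inverses $(\iota_t^T\iota_t)^{-1}\iota_t^T$. Second, you observe correctly that only linear ODEs on $\got{gl}(\got{h})$ and $\got{gl}(\got{g})$ occur. Hence the paper's maximal interval $I_0$ is all of $I$, and no shrinking is actually needed for this converse.
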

\begin{proof}
	\emph{(i)} By assumption, the following equation holds for all $t\in I$, and $u,v\in\got{h}$.
	\begin{equation*}
		\iota_t\bigleftpar\mu_{\got{h}}^t(u,v)\bigrightpar=\mu_{\got{g}}\bigleftpar\iota_t(u),\iota_t(v)\bigrightpar.
	\end{equation*}
	Differentiating with respect to $t$ both sides of the equation, we obtain:
	\begin{equation}\label{useful equation for later use in moser trick for lie subalgebras}
		\dot{\iota}_t\bigleftpar\mu_{\got{h}}^t(u,v)\bigrightpar+\iota_t\circ\frac{d}{dt}\mu_{\got{h}}^t(u,v)=\mu_{\got{g}}\bigleftpar\dot{\iota}_t(u),\iota_t(v)\bigrightpar+\mu_{\got{g}}\bigleftpar\iota_t(u),\dot{\iota}_t(v)\bigrightpar.
	\end{equation}
	Upon applying the canonical projection $\pi_{\can}^t$, we conclude that:
	\begin{align*}
			\pi_{\can}^t\circ\dot{\iota}_t\bigleftpar\mu_{\got{h}}^t(u,v)\bigrightpar&=\pi_{\can}^t\circ\mu_{\got{g}}\bigleftpar\dot{\iota}_t(u),\iota_t(v)\bigrightpar+\pi_{\got{g}/\got{h}_t}\circ\mu_{\got{g}}\bigleftpar\iota_t(u),\dot{\iota}_t(v)\bigrightpar\Leftrightarrow\\
			\pi_{\can}^t\circ\dot{\iota}_t\bigleftpar\mu_{\got{h}}^t(u,v)\bigrightpar&=-(\iota_t^*\circ\ad)^{\got{h}_t}_v(\dot{\iota}(u))+(\iota_t^*\circ\ad)^{\got{h}_t}_u(\dot{\iota}(v)).
	\end{align*}
	The last equation is equivalent to the condition $\overline{\delta}_{\iota_t}(\pi_{\can}^t\circ\dot{\iota_t})=0$.\\
	
	\emph{(ii)} 	Assuming that $(\got{h}_t,\iota_t)_{t\in I}$ is an $\Aut(\got{g})$-trivial deformation of $(\got{h},\iota)$, means that 
	\begin{equation*}
		\iota_t\circ\psi_t=\Psi_t\circ\iota, \quad \text{for all} \ t\in I,
	\end{equation*}
	where $(\Psi_t)_{t\in I}\subset\Aut(\got{g})$ satisfies $\Psi_0=\id_{\got{g}}$, and $(\psi_t)_{t\in I}\subset\GL(\got{h})$ is a family of Lie algebra isomorphisms $\psi_t:\got{h}\to(\got{h},\mu_{\got{h}}^t)$ with $\psi_0=\id_{\got{h}}$. Let $(X_t)_{t\in I}\subset\Hom(\got{g};\got{g})$ be the time-dependent vector field defined for each $t\in I$ by
	\[
	X_t\circ\Psi_t=\frac{d}{dt}\Psi_t.
	\]
	Since $(\Psi_t)_{t\in I}\subset\Aut(\got{g})$, it follows that $(X_t)_{t\in I}\subset Z^1(\got{g};\got{g})$. Differentiating now the equation $\iota_t\circ\psi_t=\Psi_t\circ\iota$ with respect to $t$, we obtain
	\begin{align*}
		\dot{\iota}_t\circ \psi_t+\iota_t\circ\dot{\psi}_t&=\biggleftpar\frac{d}{dt}\Psi_t\biggrightpar\circ\iota\Rightarrow\\
		\pi_{\can}^t\circ\dot{\iota}_t\circ\psi_t+ \cancel{\pi_{\can}^t\circ\iota_t\circ\dot{\psi}_t}&=\pi_{\can}^t\circ\biggleftpar\frac{d}{dt}\Psi_t\biggrightpar\circ\iota\Rightarrow\\
		\pi_{\can}^t\circ\dot{\iota}_t\circ\psi_t&=\pi_{\can}^t\circ X_t\circ\Psi_t\circ\iota\Rightarrow\\
		\pi_{\can}^t\circ\dot{\iota}_t\circ\psi_t&=\pi_{\can}^t\circ X_t\circ\iota_t\circ\psi_t\Rightarrow\\
		\pi_{\can}^t\circ\dot{\iota}_t&=\pi_{\can}^t\circ X_t\circ\iota_t,
	\end{align*}
	for all $t\in I$, which proves the direct implication.\\
	
	Conversely, we now assume that there exists a smooth family $(X_t)_{t\in I}\subset Z^1(\got{g};\got{g})$ such that
	\begin{equation*}
		\pi_{\can}^t\circ\dot{\iota}_t=\pi_{\can}^t\circ X_t\circ\iota_t, \quad \text{for all} \ \ t\in I,
	\end{equation*}
	which means the following equation holds for all $t\in I$ and $u\in\got{h}$:
	\begin{equation*}
		\pi_{\can}^t\bigleftpar X_t\circ\iota_t(u)-\dot{\iota}_t(u)\bigrightpar=0.
	\end{equation*}
	Therefore, we conclude that there exists a smooth family $(Y_t)_{t\in I}\subset\Hom(\got{h},\got{h})$, such that
	\begin{equation}\label{ekran-1}
		X_t\circ\iota_t-\dot{\iota}_t=\iota_t\circ Y_t.
	\end{equation}
	Let $\Phi_Y^t\eqdef\Phi_Y^{t,0}\in\GL(\got{h})$ be the flow of the time-dependent vector field $(Y_t)_{t\in I}\subset\Hom(\got{h},\got{h})$ from time $0$ to time $t$, that is
	\begin{equation*}
		\frac{d}{dt}\Phi_Y^{t}=Y_t\circ\Phi^t_Y \quad \text{and} \quad \Phi_Y^0=\id_{\got{h}},
	\end{equation*}
	defined for all $t$ in the maximal interval of existence containing $0$, denoted by $I_0\subset I$. We claim that, for each $t\in I_0$, $\Phi_Y^t:\got{h}\to(\got{h},\mu_{\got{h}}^t)$ is a Lie algebra isomorphism, that is
	\begin{equation}\label{ekran1}
		\Phi_Y^t\bigleftpar\mu_{\got{h}}(u,v)\bigrightpar=\mu_{\got{h}}^t\bigleftpar\Phi_Y^t(u),\Phi_Y^t(v)\bigrightpar, \quad \text{for all} \ t\in I_0 \ \ \text{and} \ \ u,v\in\got{h}.
	\end{equation}
	The equation \eqref{ekran1} clearly holds for $t=0$. In what follows, we show that both sides of (\ref{ekran1}) are integral curves of $Y$ with the same initial point at $t=0$. We start by differentiating the right-hand side with respect to $t$.
	\begin{align}\label{ekran1/2}
		\frac{d}{dt}\biggleftpar\mu_{\got{h}}^t\bigleftpar\Phi_Y^t(u),\Phi_Y^t(v)\bigrightpar\biggrightpar&=\frac{d}{dt}\mu_{\got{h}}^t\bigleftpar\Phi_Y^t(u),\Phi_Y^t(v)\bigrightpar+\mu_{\got{h}}^t\biggleftpar\frac{d}{dt}\Phi_Y^t(u),\Phi_Y^t(v)\biggrightpar+\mu_{\got{h}}^t\biggleftpar\Phi_Y^t(u),\frac{d}{dt}\Phi_Y^t\biggrightpar\nonumber\\
		&=\frac{d}{dt}\mu_{\got{h}}^t\bigleftpar\Phi_Y^t(u),\Phi_Y^t(v)\bigrightpar+\mu_{\got{h}}^t\bigleftpar Y_t\circ\Phi_Y^t(u),\Phi_Y^t(v)\bigrightpar+\mu_{\got{h}}^t\bigleftpar\Phi_Y^t(u),Y_t\circ\Phi_Y^t\bigrightpar.
	\end{align}
	We apply now the differential $\delta_{\iota_t}$ to both sides of the equation $\iota_t\circ Y_t=\dot{\iota}_t-X_t\circ\iota_t$.
	\begin{equation}\label{ekran2}
		\delta_{\iota_t}(\iota_t\circ Y_t)=\delta_{\iota_t}(X_t\circ\iota_t-\dot{\iota}_t).
	\end{equation}
	It is immediate to verify that for the left-hand side of equation \eqref{ekran2} holds the following
	\begin{equation}\label{ekran3}
		\delta_{\iota_t}(\iota_t\circ Y_t)=\iota_t\circ\delta_{\mu_{\got{h}}^t}\circ Y_t.
	\end{equation}
	Furthermore, for the right-hand side of \eqref{ekran2}, we obtain for all $u,v\in\got{h}$ that:
	\begin{align}\label{ekran4}
		\delta_{\iota_t}(X_t\circ\iota_t-\dot{\iota}_t)(u,v)&=\mathcolor{blue}{\mu_{\got{g}}(\iota_t(u),X_t\circ\iota_t(v))}\mathcolor{red}{-\mu_{\got{g}}(\iota_t(u),\dot{\iota}_t(v))}\nonumber\\
		& \ \ \ \mathcolor{blue}{+\mu_{\got{g}}(X_t\circ\iota_t(u),\iota_t(v))}\mathcolor{red}{-\mu_{\got{g}}(\dot{\iota}_t(u),\iota_t(v))}\nonumber\\
		& \ \ \ \mathcolor{blue}{-X_t\bigleftpar\mu_{\got{g}}(\iota_t(u),\iota_t(v))\bigrightpar}\mathcolor{red}{+\dot{\iota}_t(\mu_{\got{h}}^t(u,v))}\nonumber\\
		&=\mathcolor{blue}{0}\mathcolor{red}{-\delta_{\iota_t}(\dot{\iota}_t)(u,v)}.
	\end{align}
	But, equation (\ref{useful equation for later use in moser trick for lie subalgebras}) implies that
	\begin{equation}\label{ekran5}
		\delta_{\iota_t}(\dot{\iota}_t)=\iota_t\circ\frac{d}{dt}\mu_{\got{h}}^t.
	\end{equation}
	Due to equations \eqref{ekran3}, \eqref{ekran4}, and \eqref{ekran5}, equation \eqref{ekran2} is equivalent to
	\begin{equation*}
		\iota_t\circ\delta_{\mu_{\got{h}}^t}\circ Y_t=-\iota_t\circ\frac{d}{dt}\mu_{\got{h}}^t\Leftrightarrow\iota_t\biggleftpar\delta_{\mu_{\got{h}}^t}\circ Y_t+\frac{d}{dt}\mu_{\got{h}}^t\biggrightpar=0\Leftrightarrow\frac{d}{dt}\mu_{\got{h}}^t=-\delta_{\mu_{\got{h}}^t}\circ Y_t.
	\end{equation*}
	Going back to equation \eqref{ekran1/2}, and making use of our last equality, we have that
	\begin{align*}
		\frac{d}{dt}\biggleftpar\mu_{\got{h}}^t\bigleftpar\Phi_Y^t(u),\Phi_Y^t(v)\bigrightpar\biggrightpar&=-\delta_{\mu_{\got{h}}^t}(Y_t)\bigleftpar\Phi_Y^t(u),\Phi_Y^t(v)\bigrightpar+\mu_{\got{h}}^t\bigleftpar Y_t\circ\Phi_Y^t(u),\Phi_Y^t(v)\bigrightpar+\mu_{\got{h}}^t\bigleftpar\Phi_Y^t(u),Y_t\circ\Phi_Y^t(v)\bigrightpar,
	\end{align*}
	but\begin{equation*}
		-\delta_{\mu_{\got{h}}^t}(Y_t)\bigleftpar\Phi_Y^t(u),\Phi_Y^t(v)\bigrightpar=-\mu_{\got{h}}^t\bigleftpar\Phi_Y^t(u),Y_t\circ\Phi_Y^t(v)\bigrightpar-\mu_{\got{h}}^t\bigleftpar Y_t\circ\Phi_Y^t(u),\Phi_Y^t(v)\bigrightpar+Y_t\bigleftpar\mu_{\got{h}}^t\bigleftpar\Phi_Y^t(u),\Phi_Y^t(v)\bigrightpar\bigrightpar,
	\end{equation*}
	and so we conclude that
	\begin{equation*}
		\frac{d}{dt}\biggleftpar\mu_{\got{h}}^t\bigleftpar\Phi_Y^t(u),\Phi_Y^t(v)\bigrightpar\biggrightpar=Y_t\bigleftpar\mu_{\got{h}}^t\bigleftpar\Phi_Y^t(u),\Phi_Y^t(v)\bigrightpar\bigrightpar,\quad \text{for all} \ t\in I_0 \ \ \text{and} \ \ u,v\in\got{h}.
	\end{equation*}
	Taking now the derivative with respect to $t$ of the left-hand side of \eqref{ekran1}, we get
	\begin{equation*}
		\frac{d}{dt}\Phi_Y^t\bigleftpar\mu_{\got{h}}(u,v)\bigrightpar=Y_t\bigleftpar\Phi_Y^t\bigleftpar\mu_{\got{h}}(u,v)\bigrightpar\bigrightpar, \quad \text{for all} \ t\in I_0 \ \ \text{and} \ \ u,v\in\got{h}.
	\end{equation*}
	Therefore, we have shown that, for any $u,v\in\got{h}$, the curves $t\mto\Phi_Y^t\bigl(\mu_{\mathfrak{h}}(u,v)\bigr)$ and $t\mto\mu_{\mathfrak{h}}^t\bigl(\Phi_Y^t(u),\Phi_Y^t(v)\bigr)$ are both integral curves of the time-dependent vector field $(Y_t)_{t\in I}\subset \mathrm{Hom}(\mathfrak{h},\mathfrak{h})$, starting from the same point at $t=0$. By uniqueness of solutions, they coincide for all $t\in I_0$.\\
	We now obtain the desired conclusion from the following short computation, which holds for all $t\in I$.
	\begin{align*}
		\frac{d}{dt}(\iota_t\circ\Phi_Y^t)&=\dot{\iota}_t\circ\Phi_Y^t+\iota_t\circ\frac{d}{dt}\Phi_{{Y}}^t\\
		&\stackrel{\eqref{ekran-1}}{=}X_t\circ\iota_t\circ\Phi_t^Y-\iota_t\circ Y_t\circ\Phi_Y^t+\iota_t\circ\frac{d}{dt}\Phi_Y^t\\
		&=X_t\circ(\iota_t\circ\Phi_t^Y)-\cancel{\iota_t\circ Y_t\circ\Phi_Y^t}+\cancel{\iota_t\circ Y_t\circ\Phi_Y^t}\\
		&=(\iota_t\circ\Phi_t^Y)^*(X_t).
	\end{align*}
	By part (iii) of Proposition \ref{Moser lemma for Lie algebra morphisms}, it follows that for sufficiently small values of \(t\), the Lie algebra morphism \(\iota_t\circ\Phi_Y^t:\got{h}\to\got{g}\) is $\Aut(\got{g})$-trivial, that is, \(\iota_t\circ\Phi_Y^t=\Psi_t\circ\iota\) (see the proof of part (iii) in Proposition \ref{Moser lemma for Lie algebra morphisms} for how $(\Psi_t)_{t\in I}\subset\Aut(\got{g})$ is obtained), which implies that the deformation \((\got{h}_t,\iota_t)_{t\in I}\) of the Lie subalgebra $(\got{h},\iota)$ is $\Aut(\got{g})$-trivial for sufficiently small values of \(t\).
\end{proof}

\begin{thm}\label{Moser trick for Lie ideals}
	For any short exact sequence of Lie algebras
	\[
	0 \longrightarrow \mathfrak{h} \stackrel{\iota}{\longrightarrow}\mathfrak{g} \stackrel{\pi}{\longrightarrow} \mathfrak{k} \longrightarrow 0,
	\]
	and any smooth deformation $(\got{h},\iota_t,\pi_t,\got{k}_t)_{t\in I}$ of the Lie ideal $(\got{h},\iota,\pi,\got{k})$, the following statements hold.
			\begin{enumerate}[label=(\roman*)]
		\item For each $t\in I$, the cochain
		\begin{equation*}
			\dot{\pi}_t\circ\iota_t\in C_{\pi_t}^\bullet\bigleftpar\got{g};\Hom(\got{h}_t,\got{k}_t)\bigrightpar
		\end{equation*}
		is a cocycle.\vspace*{+0.1cm}
		\item If $(\got{h}_t,\iota_t,\pi_t,\got{k}_t)_{t\in I}$ is $\Aut(\got{g})$-trivial, then there exists a smooth family $(X_t)_{t\in I}\subset Z^1(\got{g};\got{g})$ such that
		\begin{equation*}
			\dot{\pi}_t\circ\iota_t=-\pi_t\circ X_t\circ\iota_t.
		\end{equation*}
		Conversely, the existence of such a family implies that $(\got{h}_t,\iota_t,\pi_t,\got{k}_t)_{t\in I}$  is $\Aut(\got{g})$-trivial after possibly shrinking $I$.
	\end{enumerate}
\end{thm}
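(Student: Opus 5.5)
Part (i) is checked by differentiating the structure equations, as in part (i) of Theorem \ref{Moser trick for Lie subalgebras}. Differentiating $\pi_t\circ\iota_t=0$ gives $\dot{\pi}_t\circ\iota_t=-\pi_t\circ\dot{\iota}_t$, so the cochain $c_t\eqdef\dot{\pi}_t\circ\iota_t$ may be written in either form. Differentiating $\pi_t(\mu_{\got{g}}(x,y))=\mu_{\got{k}}^t(\pi_t(x),\pi_t(y))$ gives
\[
\dot{\pi}_t(\mu_{\got{g}}(x,y))=\dot{\mu}^t_{\got{k}}(\pi_t x,\pi_t y)+\mu^t_{\got{k}}(\dot{\pi}_t x,\pi_t y)+\mu^t_{\got{k}}(\pi_t x,\dot{\pi}_t y).
\]
Evaluate this at $y=\iota_t(u)$. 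The terms containing $\pi_t\iota_t(u)=0$ drop out, and using $\mu^t_{\got{k}}(\pi_t x,\pi_t z)=\pi_t\mu_{\got{g}}(x,z)$ we get
\[
c_t(\ad^{\got{h}_t}_x u)=\ad^{\got{k}_t}_x(c_t(u)).
\]
This says $c_t$ is $\got{g}$-equivariant, i.e.\ $\overline{\delta}_{\pi_t}(c_t)=0$, viewing $c_t$ as a $0$-cochain in $\Hom(\got{h}_t,\got{k}_t)$. Here $\iota_t$ identifies $\got{h}_t$ with $\ker\pi_t$, so the action $\ad^{\got{h}_t}$ is well defined.

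For the direct implication in (ii), set $X_t\eqdef\dot{\beta}_t\circ\beta_t^{-1}$. This is a derivation, exactly as in Proposition \ref{Moser lemma for Lie algebra morphisms}(iii). Differentiating $\pi_t\circ\beta_t=\gamma_t\circ\pi$ gives $\dot{\pi}_t\beta_t+\pi_t X_t\beta_t=\dot{\gamma}_t\pi$. Precompose with $\iota$ and use $\beta_t\iota=\iota_t\alpha_t$ and $\pi\iota=0$. This gives $(\dot{\pi}_t+\pi_t X_t)\iota_t\alpha_t=0$, and since $\alpha_t$ is invertible, $\dot{\pi}_t\iota_t=-\pi_tX_t\iota_t$.

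For the converse, the plan is to reduce to Theorem \ref{Moser trick for Lie subalgebras}. From $\dot{\pi}_t\iota_t=-\pi_t\dot{\iota}_t$, the hypothesis becomes $\pi_t(\dot{\iota}_t-X_t\iota_t)=0$. Since $\ker\pi_t=\iota_t(\got{h})$, this is the same as $\pi^t_{\can}\circ\dot{\iota}_t=\pi^t_{\can}\circ X_t\circ\iota_t$, because $\pi_t$ factors through an isomorphism $\got{g}/\iota_t(\got{h}_t)\cong\got{k}_t$.

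Theorem \ref{Moser trick for Lie subalgebras}(ii) then gives, after shrinking $I$, Lie algebra isomorphisms $\alpha_t:\got{h}\to\got{h}_t$ and automorphisms $\beta_t\in\Aut(\got{g})$ with $\iota_t\alpha_t=\beta_t\iota$. It remains to construct $\gamma_t$. Since $\beta_t$ maps $\ker\pi=\iota(\got{h})$ onto $\iota_t(\got{h})=\ker\pi_t$, the surjection $\pi_t\circ\beta_t:\got{g}\to\got{k}_t$ has kernel exactly $\ker\pi$. It therefore descends to a unique linear isomorphism $\gamma_t:\got{k}\to\got{k}_t$ with $\gamma_t\circ\pi=\pi_t\circ\beta_t$. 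Since $\pi$, $\pi_t$ and $\beta_t$ are Lie algebra morphisms and $\pi$ is surjective, $\gamma_t$ is a Lie algebra isomorphism. At $t=0$ we have $\gamma_0=\id_{\got{k}}$.

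The main obstacle is the smoothness of $(\gamma_t)$, which is not immediate because $\gamma_t$ is defined by a quotient. Fix a linear section $\sigma:\got{k}\to\got{g}$ of $\pi$. Then
\[
\gamma_t=\pi_t\circ\beta_t\circ\sigma
\]
is well defined, because $\gamma_t\pi=\pi_t\beta_t$ forces $\gamma_t=\gamma_t\pi\sigma=\pi_t\beta_t\sigma$. This expression is manifestly smooth in $t$, since $\pi_t$ and $\beta_t$ are. The smoothness of $\alpha_t$ and $\beta_t$ themselves comes from the flows in the proofs of Proposition \ref{Moser lemma for Lie algebra morphisms} and Theorem \ref{Moser trick for Lie subalgebras}. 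This gives the commuting diagram, so the deformation is $\Aut(\got{g})$-trivial.
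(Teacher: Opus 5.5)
Your proposal is correct and follows the paper's proof essentially step for step. It uses the same evaluation of the differentiated bracket identity at $y=\iota_t(u)$ for (i), the same choice $X_t=\dot{\beta}_t\circ\beta_t^{-1}$ for the direct implication, and the same reduction of the converse to Theorem~\ref{Moser trick for Lie subalgebras}, followed by descending $\pi_t\circ\beta_t$ along $\pi$ to get $\gamma_t$. Your formula $\gamma_t=\pi_t\circ\beta_t\circ\sigma$ justifies the smoothness of $(\gamma_t)_{t\in I}$, which the paper only asserts, and your computation correctly has $\pi_t\circ X_t\circ\iota_t$ where the paper's display has the typo $\dot{\pi}_t\circ X_t\circ\iota_t$.
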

\begin{proof}
		\emph{(i)} By assumption, the following equation holds for all $t\in I$, and $x,y\in\got{g}$.
	\begin{equation*}
		\pi_t\bigleftpar\mu_{\got{g}}(x,y)\bigrightpar=\mu^t_{\got{k}}\bigleftpar\pi_t(x),\pi_t(y)\bigrightpar
	\end{equation*}
Differentiating with respect to $t$ both sides of the equation, we obtain:
	\begin{equation}
\dot{\pi}_t\bigleftpar\mu_{\got{g}}(x,y)\bigrightpar=\biggleftpar\frac{d}{dt}\mu_{\got{k}}^t\biggrightpar\bigleftpar\pi_t(x),\pi_t(y)\bigrightpar+\mu^t_{\got{k}}\bigleftpar\dot{\pi}_t(x),\pi_t(y)\bigrightpar+\mu^t_{\got{k}}\bigleftpar\pi_t(x),\dot{\pi}_t(y)\bigrightpar.
	\end{equation}
Assume that \(y\in\ker(\pi_t)\), that is, \(y=\iota_t(u)\) for some unique \(u\in\mathfrak{h}\). Then the above equation simplifies to
	\begin{align*}
	\dot{\pi}_t\bigleftpar\mu_{\got{g}}(x,\iota_t(u))\bigrightpar=\mu^t_{\got{k}}\bigleftpar\pi_t(x),\dot{\pi}_t\circ\iota_t(u)\bigrightpar.
	\end{align*}
	The last equation is equivalent to the condition $\overline{\delta}_{\pi_t}(\dot{\pi}_t\circ\iota_t)=0$.\\
	
		\emph{(ii)} Assume that $(\got{h}_t,\iota_t,\pi_t,\got{k}_t)_{t\in I}$ is an $\Aut(\got{g})$-trivial deformation of $(\got{h},\iota,\pi,\got{k})$. Then there exists a smooth family $(\alpha_t,\beta_t,\gamma_t)_{t\in I}\subset \GL(\got{h})\times\Aut(\got{g})\times\GL(\got{k})$ such that $\alpha_0=\id_{\got{h}}$, $\beta_0=\id_{\got{g}}$, and $\gamma_0=\id_{\got{k}}$, satisfying, for all $t\in I$:
		\begin{align*}
		\ \beta_t\circ\iota=\iota_t\circ\alpha_t, \quad \text{and} \quad \ \gamma_t\circ\pi=\pi_t\circ\beta_t
		\end{align*}
	Differentiating the identity $\gamma_t\circ\pi=\pi_t\circ\beta_t$ with respect to $t$, we obtain
		\begin{align*}
			\dot{\gamma}_t\circ\pi&=\dot{\pi}_t\circ\beta_t+\pi_t\circ\dot{\beta}_t\Rightarrow\\
				\dot{\gamma}_t\circ\pi\circ\beta_t^{-1}&=\dot{\pi}_t+\pi_t\circ\dot{\beta}_t\circ\beta_t^{-1}\Rightarrow\\
				\dot{\gamma}_t\circ\pi\circ\beta_t^{-1}\circ\iota_t&=\dot{\pi}_t\circ\iota_t+\pi_t\circ\underbrace{\dot{\beta}_t\circ\beta_t^{-1}}_{\eqdef X_t}\circ\iota_t\Rightarrow\\
					\cancel{\dot{\gamma}_t\circ\pi\circ\iota\circ\alpha_t^{-1}}&=\dot{\pi}_t\circ\iota_t+\dot{\pi}_t\circ X_t\circ\iota_t\Rightarrow\\
					\dot{\pi}_t\circ\iota_t&=-\dot{\pi}_t\circ X_t\circ\iota_t, \quad \text{for all} \ \ t\in I,
		\end{align*}
and this concludes the direct implication. Observe that $(X_t)_{t\in I}\subset Z^1(\got{g};\got{g})$, since $\dot{\beta}_t$ is a $1$-cocycle and $X_t$ is its pullback under the Lie algebra automorphism $\beta_t^{-1}\in\Aut(\got{g})$, for every $t\in I$.\\

Conversely, we now assume that there exists a smooth family $(X_t)_{t\in I}\subset Z^1(\got{g};\got{g})$ such that
\begin{equation*}
		\dot{\pi}_t\circ\iota_t=-\pi_t\circ X_t\circ\iota_t, \quad \text{for all} \ \ t\in I.
\end{equation*}
Using the identity $\dot{\pi}_t\circ\iota_t=-\pi_t\circ\dot{\iota}_t$ in the equation above, we obtain
\begin{equation*}
	\pi_t\circ\dot{\iota}_t=\pi_t\circ X_t\circ\iota_t,\quad \text{for all } t\in I,
\end{equation*}
which is equivalent saying that the following equation holds for all $t\in I$ and $u\in\got{h}$:
\begin{equation*}
	\pi_t\bigleftpar X_t\circ\iota_t(u)-\dot{\iota}_t(u)\bigrightpar=0.
\end{equation*}
Therefore, we conclude that there exists a smooth family $(Y_t)_{t\in I}\subset\Hom(\got{h},\got{h})$, such that
\begin{equation}
	X_t\circ\iota_t-\dot{\iota}_t=\iota_t\circ Y_t.
\end{equation}
By the proof of Theorem \ref{Moser trick for Lie subalgebras}, specifically from equation \eqref{ekran-1} onwards, we know that there exists an open interval $I'\subset I_0$ containing $0$ and a smooth family $(\Psi_t)_{t\in I'}\subset\Aut(\got{g})$ with $\Psi_0=\id_{\got{g}}$ such that
\begin{equation*}
	\iota_t\circ\Phi_Y^t=\Psi_t\circ\iota,
\end{equation*}
where $\Phi_Y^t\eqdef\Phi_Y^{t,0}\in\GL(\got{h})$ denotes the flow of the time-dependent vector field $(Y_t)_{t\in I}\subset\Hom(\got{h},\got{h})$ from time $0$ to time $t$, and $I_0$ is the maximal open interval on which this flow is defined. Finally, we define a smooth family $(\overline{\Psi_t})_{t\in I'}\subset\GL(\got{k})$ by the formula
\begin{equation*}
	\overline{\Psi_t}\circ\pi\eqdef\pi_t\circ\Psi_t.
\end{equation*}
It is immediate to verify that $\overline{\Psi_t}$ is well-defined for all $t\in I'$, that $\overline{\Psi_0}=\id_{\got{k}}$, and that, for each $t\in I'$, $\Psi_t:\got{k}\to(\got{k},\mu_{\got{k}}^t)$ is a Lie algebra isomorphism, since it is a composition of Lie algebra isomorphisms. The proof is now complete, since the smooth family
\begin{equation*}
	(\alpha_t,\beta_t,\gamma_t)_{t\in I'}\eqdef(\Phi_Y^t,\Psi_t,\overline{\Psi_t})_{t\in I'}\subset\GL(\got{h})\times\Aut(\got{g})\times\GL(\got{k}),
\end{equation*}
establishes the $\Aut(\got{g})$-triviality of the smooth deformation $(\got{h}_t,\iota_t,\pi_t,\got{k}_t)_{t\in I}$ of the Lie ideal $(\got{h},\iota,\pi,\got{k})$ for sufficiently small $t$.
\end{proof}

\section{Moser trick for foliations}\label{section 3 of the paper on deformations of foliations}

Let $\mathcal{F}$ be a $k$-dimensional foliation on an $n$-dimensional manifold $M$, and denote by $F \coloneqq T\mathcal{F}$ its associated involutive distribution of $TM$. In what follows, we will occasionally refer to the involutive subbundle associated with a foliation also as a foliation; the distinction will always be clear from the aforementioned notation. As in the previous section, smooth deformations of foliations are parametrized by an open interval $I\subset\mbb{R}$ containing the origin.

\begin{defin}[\cite{delHoyo-Fernades-deformations-of-compact-foliations-2019}]\label{definition of a deformation of a foliation, global version}
	A \emph{smooth deformation of a foliation $\cali{F}$ on $M$} is a foliation $\widetilde{\cali{F}}$ on $M\times I$ such that  ${\cali{F}}_t\eqdef\ttilde{\cali{F}}|_{M\times\{t\}}$ is tangent to the slices $M\times\{t\}$ for all $t\in I$ and ${\cali{F}}_0={\cali{F}}$.
\end{defin}

\begin{rem}\label{equivalence of deforming the foliation as a foliation and as an involutive distribution}
	Since $\ttilde{\cali{F}}$ is tangent to the slices, that means if $\{\widetilde{e_i}\}_{i=1}^k$ is a local frame of $\ttilde{F}\eqdef T\widetilde{\cali{F}}$ around $U\times J\subset M\times I$. Hence, it should have the following form:
	\begin{equation}\label{formula for a local frame of the total foliation associated to a deformation of a foliation}
		\ttilde{e_i}:U\times J\to TM\times TI, \quad \ttilde{e_i}(p,t)=(e_i(p,t), 0_t),
	\end{equation}
	where $e_i:U\times J\to TM$ is a time-dependent vector field such that $\{e_i(\cdot, t)\eqqcolon e^t_i\}_{i=1}^k$ is a local frame for the vector subbundle $F_t\eqdef T\cali{F}_t\subset TM\times TI$ over $M\times\{t\}\simeq M$. Now, involutivity of $\ttilde{F}$ imposes that, for any $i,j\in\{1,\dots, k\}$, $[\ttilde{e_i},\ttilde{e_j}]\in\Gamma(\ttilde{F})$ where the bracket is the Lie bracket of vector fields on $M\times I$. Due to \eqref{formula for a local frame of the total foliation associated to a deformation of a foliation}, it is immediate that the involutivity condition of $\ttilde{F}$ is equivalent to the involutivity condition for each $F_t$ i.e. $[e_i^t,e_j^t]\in\Gamma(F_t)$ for each $t\in I$. Therefore, $(F_t)_{t\in I}$ is a smooth family of involutive subbundles in the sense that there exists a time-dependent local frame that is a local frame of $F_t$ for each $t\in I$. This definition for deformations of foliations, equivalent to Definition~\ref{definition of a deformation of a foliation, global version}, was first introduced by Heitsch in \cite{Heitsch-A-cohomology-for-foliated-manifolds-1975}. Sometimes we refer to $\ttilde{F}\eqdef T\widetilde{\cali{F}}$ as the total foliation associated to $(F_t)_{t\in I}\eqdef(T\cali{F}_t)_{t\in I}$.
\end{rem}

Given an involutive subbundle $F\eqdef T\cali{F}$ of $TM$, denote the normal bundle of the foliation $\mathcal{F}$ by $\nu(\mathcal{F}) \coloneqq TM / F$, and let $v \mapsto \overline{v}$ be the canonical projection $TM \to \nu(\mathcal{F})$, for any $v \in TM$. There is a natural representation of $F\eqdef T\cali{F}$ on the normal bundle $\nu(\mathcal{F})$: the Bott connection $\nabla^{F} \colon \Gamma(F) \times \Gamma(\nu(\mathcal{F})) \to \Gamma(\nu(\mathcal{F}))$, which is originally introduced in \cite{Bott1972characteristicclasses-1972} as follows:
\begin{equation*}
\nabla^F_{X}\overline{Y}=\overline{[X,Y]}, \quad \text{for any} \ X\in\Gamma(F) \ \text{and} \ Y\in\mathfrak{X}(M).
\end{equation*}
In \cite{Heitsch-A-cohomology-for-foliated-manifolds-1975}, it is proven that the deformation complex of foliations is $(\Omega^\bullet(\cali{F};\nu(\cali{F})),d_{\nabla^F})$, in the sense of the following Lemma \ref{cocycle proposition for foliations}. Its corresponding cohomology is denoted analogously by $H^{\bullet}(\cali{F};\nu(\cali{F}))$. We pick a non-degenerate metric on $M$ and we consider the orthogonal complement $F^\perp$ with respect to $F$. From now on, we use implicitly the canonical identification $F^\perp\simeq \nu(\cali{F})$. We have two families of orthogonal projections, with respect to the metric, denoted by: $$\text{(i)} \ \pi_t:TM\to F_t,\quad \text{and} \quad \text{(ii)} \ \pi_t^\perp:TM\to F_t^\perp\simeq TM/F_t\eqqcolon\nu(\cali{F}_t).$$

\begin{lem}[\cite{Heitsch-A-cohomology-for-foliated-manifolds-1975}]\label{lemma for differentiating the orthogonal projections in deformations of foliations}
(1) If $X\in\Gamma(F_t)$, then $\frac{d}{dt}\pi_t(X)\in\Gamma(\nu(\cali{F}_t))$.\\
\hspace*{+2.9cm} (2)$\frac{d}{dt}\pi_t^\perp(X)=-\frac{d}{dt}\pi_t(X)$, for any $X\in\got{X}(M)$.
\end{lem}
%\begin{proof}
%	(1) Differentiating with respect to $t$ the equation $\pi_t\circ\pi_t=\pi_t$, we get $$\bigg{(}\frac{d}{dt}\pi_t\bigg{)}\circ\pi_t+\pi_t\circ\bigg{(}\frac{d}{dt}\pi_t\bigg{)}=\frac{d}{dt}\pi_t.$$ Since $\pi_t(X)=X$ for any $X\in\Gamma(F_t)$, we obtain that $\pi_t\circ\tfrac{d}{dt}\pi_t(X)=0$ or equivalently $\frac{d}{dt}\pi_t(X)\in\nu(\cali{F}_t)$.\\ \\
%	(2) Differentiating with respect to $t$ the equation $\pi_t+\pi_t^\perp=\id$, gives us that $\frac{d}{dt}\pi_t^\perp=-\frac{d}{dt}\pi_t$.
%\end{proof}

\begin{prop}[\cite{Heitsch-A-cohomology-for-foliated-manifolds-1975}]\label{cocycle proposition for foliations}
	If $(F_t)_{t\in I}$ is a smooth deformation of $F$, then
	\begin{equation*}
		\sigma_0(X)=\frac{d}{dt}\bigg{|}_{t=0}\pi_t(X), \quad X\in\Gamma(F),
	\end{equation*}
	is a $1$-cocycle in $\Omega^{\bullet}(\cali{F};\nu(\cali{F}))$.
\end{prop}
%\begin{proof}
%	Since for each $t\in I$, the subbundle $F_t$ is involutive, the map $\Lambda_t\in\Omega^2(M; \nu(\cali{F}_t))$ given by
%	\begin{equation*}
%		\Lambda_t(X,Y)=\pi_t^\perp[\pi_t(X),\pi_t(Y)]
%	\end{equation*}
%	vanishes identically for all $t\in I$ and $X,Y\in\got{X}(M)$. Assuming that $X,Y\in\Gamma(F)$ and differentiating the above expression at $t=0$, we obtain, using that $\pi_0(X)=X$ and $\pi_0(Y)=Y$, that
%	\begin{align*}
%		\fracddtz\pi_t([X,Y])=\pi_0^\perp\bigg{(}\bigg{[}\fracddtz\pi_t(X),Y\bigg{]}\bigg{)}+\pi_0^\perp\bigg{(}\bigg{[}X,\fracddtz\pi_t(Y)\bigg{]}\bigg{)},
%	\end{align*}
%	which is exactly the cocycle condition $d_{\nabla^F}(\sigma)=0$.
%\end{proof}
The element $\sigma_0\in \Omega^1(\cali{F},\nu(\cali{F}))$ and $[\sigma_0]\in H^1(\cali{F};\nu(\cali{F}))$ are called the \emph{deformation cocycle} and the \emph{deformation cohomology class} associated to a smooth deformation $(F_t)_{t\in I}=(T\cali{F}_t)_{t\in I}$ of a foliation $F=T\cali{F}$.
\begin{defin}
Two smooth deformations $(F_t)_{t\in I}$ and $(F'_t)_{t\in I}$ of a foliation $F\subset TM$ are said to be \emph{equivalent} if there exists an isotopy\footnote{Recall, by Definition \ref{defin of isotopy}, that an isotopy $(\phi_t)_{t\in I}$ is a smooth family of diffeomorphisms of $M$ such that $\phi_0=\id$.} $(\phi_t)_{t\in I}$ of $M$ such that $(\phi_t)_*(F_t)=F'_t$, for each $t\in I$. A smooth deformation $(F_t)_{t\in I}$ is called \emph{constant} if $F_t=F$ for all $t$. A smooth deformation is called \emph{trivial} if it is equivalent to the constant deformation.
\end{defin}
The following remark is based on \cite[Remark 4.1.]{delHoyo-Fernades-deformations-of-compact-foliations-2019}.
\begin{rem}\label{remark about the smooth vanishing condition for Moser thm for foliations}
There is an associated Bott connection $\nabla^{F_t}:\Gamma(F_t)\times\Gamma(\nu(\cali{F}_t))\to\Gamma(\nu(\cali{F}_t))$ to each $F_t$, defined by
\begin{equation*}
	\nabla^{F_t}_{X}(\pi_t^\perp(Y))=\pi_t^\perp([X,Y]), \quad \text{for any} \ X\in\Gamma(F_t) \ \text{and} \ Y\in\got{X}(M).
\end{equation*}
Denote by $\nu(\widetilde{\cali{F}})\coloneqq (TM\times TI)/\ttilde{F}$ the normal bundle to the total foliation $\ttilde{{F}}$ and by $\ttilde{\pi}:TM\times TI\to\nu(\widetilde{\cali{F}})$ the canonical projection. Analogously, there is a Bott connection $\nabla^{\widetilde{{F}}}:\Gamma(\ttilde{F})\times\Gamma(\nu(\widetilde{\cali{F}}))\to\Gamma(\nu(\widetilde{\cali{F}}))$ defined by
\begin{equation*}
	\nabla^{\ttilde{F}}_{\ttilde{X}}(\ttilde{\pi}(\ttilde{Y}))=\ttilde{\pi}([\ttilde{X},\ttilde{Y}]), \quad \text{for any} \ \ttilde{X}\in\Gamma(\ttilde{{F}}) \ \text{and} \ \ttilde{Y}\in\Gamma(TM\times TI).
\end{equation*}
Denote by $K\coloneqq (TM\times 0_I)/\ttilde{F}$ the subbundle of $\nu(\widetilde{\cali{F}})$ which is pointwise, for any $(p,t)\in M\times I$, is given by $K_{(p,t)}=T_pM/F_t=(\nu(\cali{F}_t))_p$. The Bott connection $\nabla^{\ttilde{F}}$ preserves the subbundle $K$ since $$\ttilde{\pi}([\ttilde{X},Y])\in\Gamma(K), \quad \ \text{for any} \ \ttilde{X}\in\Gamma(\ttilde{F}) \ \text{and} \ Y\in\Gamma(TM\times 0_I).$$ Therefore, we get the complex $(\Omega^\bullet(\widetilde{\cali{F}}; K), d_{\nabla^{\widetilde{{F}}}})$. The Bott connections $\nabla^{F_t}$ on $\nu(\cali{F}_t)$ and $\nabla^{\ttilde{F}}$ on $K$ are related, for any $X\in\Gamma(F)$ and $Y\in\Gamma(TM)$, by the following equation:
\begin{equation}\label{equation in MoserThmForFoliations relating global Bott connection with those at each time t}
	\nabla^{\ttilde{F}}_{\ttilde{X}_{(p,t)}}(\ttilde{\pi}(Y))=\ttilde{\pi}([\ttilde{{X}},Y]_{(p,t)})=\pi_t^\perp([X_t,Y_t]_p)=\nabla_{X_t|_p}^{F_t}\pi_t^\perp(Y_t),
\end{equation}
where $\ttilde{X}_{(p,t)}=(X_t|_p,0_t)$ and $Y_{(p,t)}\eqdef(Y_t|_p, 0_t)$ for any $(p,t)\in M\times I$. For each time $t\in I$, there is an associated deformation cocycle $\sigma_t\in\Omega^1(\cali{F}_t; \nu(\cali{F}_t))$ given by
\begin{equation*}
	\sigma_t(X)=\frac{d}{dt}\pi_t(X), \quad   X\in\Gamma(F_t).
\end{equation*}
These cocycles give rise to a global cocycle $\sigma\in \Omega^1(\widetilde{\cali{F}}; K)$ via the following equation:
\begin{equation}\label{equation which defines the global cocycle from those at each time t in moser thm for foliations}
	\sigma(\ttilde{X}_{(p,t)})\coloneqq\sigma_t(X_t|_p), \quad \text{for any} \ (p,t)\in M\times I.
\end{equation}
Notice that $\sigma$ is indeed a cocycle since $\{\sigma_t\}_{t\in I}$ are cocycles. More specifically, this is induced by the following formula
\begin{equation}\label{equation for the global differential and those at each time for the moser thm of foliations}
	d_{\nabla^{\ttilde{F}}}(\sigma)(\ttilde{X},\ttilde{Y})=d_{\nabla^{F_t}}(\sigma_t)(X_t,Y_t), \quad \ttilde{X}, \ttilde{Y}\in\Gamma(\widetilde{F}).
\end{equation}
\emph{In the following theorem the condition that the cocycles $\{\sigma_t\}_{t\in I}$ vanish smoothly in cohomology with respect to $t$--or equivalently that $[\sigma_t]\in H^1(\cali{F}_t;\nu(\cali{F}_t))$ vanish smoothly with respect to $t$--means that the cocycle $\sigma\in \Omega^1(\widetilde{\cali{F}}; K)$ is a coboundary, which means there exists $Y\in\Gamma(TM\times 0_I)$ such that
	\begin{equation}\label{equation for the smooth vanishing condition in moser thm for foliations}
		\sigma(\widetilde{X})=d_{\nabla^{\ttilde{F}}}(\ttilde{\pi}(Y))(\ttilde{X}), \quad \text{for any} \ \ \ttilde{X}\in\Gamma(\ttilde{F}).
\end{equation}}
Using \eqref{equation in MoserThmForFoliations relating global Bott connection with those at each time t}, \eqref{equation which defines the global cocycle from those at each time t in moser thm for foliations} and \eqref{equation for the global differential and those at each time for the moser thm of foliations}, we have for each $t\in I$ that:
\begin{equation*}
	\sigma(\ttilde{X})=d_{\nabla^{\ttilde{F}}}(\ttilde{\pi}(Y))(\ttilde{X})\Rightarrow \sigma_t(X_t)=d_{\nabla^{F_t}}(\pi_t^\perp(Y_t))(X_t)=\nabla^{F_t}_{X_t}\pi_t^\perp(Y_t)=\pi_t^\perp([X_t,Y_t]).
\end{equation*}
\end{rem}
Now, we are ready to give a direct proof of the Moser trick for foliations \cite[Theorem 2.]{Crainic-Moerdijk-Deformations-08'}.
\begin{thm}\label{Moser Theorem for foliations}
Let $(F_t)_{t\in I}$ be a smooth deformation of a foliation $F\subset TM$. If $(F_t)_{t\in I}$ is a trivial deformation then the deformation cocycles,
\begin{equation*}
	\sigma_t\eqdef\frac{d}{dt}\pi_t\in\Omega^1(\cali{F}_t;\nu(\cali{F}_t)),
\end{equation*}
vanish smoothly in cohomology with respect to $t$ (see Remark \ref{remark about the smooth vanishing condition for Moser thm for foliations} for the terminology). Conversely, if $M$ is compact and $[\sigma_t]\in H^1(\cali{F}_t; \nu(\cali{F}_t))$ vanish smoothly with respect to $t$, then $(F_t)_{t\in I}$ is trivial.
\end{thm}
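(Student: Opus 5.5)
We follow the pattern of Proposition \ref{Moser lemma for Lie algebra morphisms}(ii): the coboundary condition supplies a time-dependent vector field, and its flow is the isotopy.

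\emph{Direct implication.} Suppose $(\phi_t)_{t\in I}$ is an isotopy with $(\phi_t)_*(F)=F_t$ (a trivial deformation is equivalent to the constant one; if the isotopy goes the other way, replace it by its inverse). Let $Y_t$ be the time-dependent vector field generating it, $\frac{d}{dt}\phi_t=Y_t\circ\phi_t$, and set $Y\eqdef(Y_t,0)\in\Gamma(TM\times 0_I)$. Take a local frame $\{e_i\}$ of $F$ and push it forward: $e_i^t\eqdef(\phi_t)_*e_i$ is a time-dependent local frame of $F_t$. Differentiating gives $\frac{d}{dt}e^t_i=-[Y_t,e^t_i]=[e^t_i,Y_t]$. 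Since $e^t_i=\pi_t(e^t_i)$, we also have $\frac{d}{dt}e_i^t=\sigma_t(e^t_i)+\pi_t(\frac{d}{dt}e^t_i)$. Projecting with $\pi_t^\perp$ and using Lemma \ref{lemma for differentiating the orthogonal projections in deformations of foliations}(1) gives
\[
\sigma_t(e_i^t)=\pi_t^\perp\bigl([e_i^t,Y_t]\bigr)=\nabla^{F_t}_{e_i^t}\pi_t^\perp(Y_t).
\]
The identity extends to $C^\infty(M\times I)$-combinations of the frame by tensoriality. By \eqref{equation in MoserThmForFoliations relating global Bott connection with those at each time t}, this is exactly \eqref{equation for the smooth vanishing condition in moser thm for foliations}.

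\emph{Converse.} Let $Y\in\Gamma(TM\times 0_I)$ satisfy \eqref{equation for the smooth vanishing condition in moser thm for foliations}, so that $\sigma_t(X)=\pi_t^\perp([X,Y_t])$ for all $X\in\Gamma(F_t)$. We first modify $Y_t$ to $Y_t'\eqdef\pi_t^\perp(Y_t)$. This changes nothing, because $\pi_t^\perp([X,\pi_t(Y_t)])=0$ by involutivity of $F_t$. Since $M$ is compact, the flow $\phi_t\eqdef\Phi^{t,0}_{Y'}$ exists on all of $I$ and is an isotopy. The goal is $(\phi_t)_*F=F_t$, equivalently $D_t\eqdef(\phi_t^{-1})_*F_t=F$ for all $t$. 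Equality holds at $t=0$, so it suffices to show that $D_t$ is constant, for instance by showing $\frac{d}{dt}\rho_t=0$, where $\rho_t$ is the orthogonal projection onto $D_t$.

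Concretely, pick a local frame $e_i^t$ of $F_t$ and set $f_i^t\eqdef(\phi_t^{-1})_*e_i^t$. Then
\[
\frac{d}{dt}f_i^t=(\phi_t^{-1})_*\Bigl(\frac{d}{dt}e_i^t+[Y'_t,e_i^t]\Bigr),
\]
and the argument reduces to showing that $\frac{d}{dt}e_i^t-[e_i^t,Y'_t]\in\Gamma(F_t)$. The normal component of $\frac{d}{dt}e_i^t$ is $\pi_t^\perp\frac{d}{dt}(\pi_t e_i^t)=\sigma_t(e_i^t)$: the term $\pi_t^\perp\pi_t(\cdot)$ vanishes, and Lemma \ref{lemma for differentiating the orthogonal projections in deformations of foliations}(1) puts $\frac{d}{dt}\pi_t$ applied to $e_i^t\in\Gamma(F_t)$ into $\nu(\cali F_t)$. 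The normal component of $[e_i^t,Y'_t]$ is also $\sigma_t(e_i^t)$, by the hypothesis. Hence $\frac{d}{dt}f_i^t\in\Gamma(D_t)$ modulo terms in $D_t$.

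It remains to turn this into constancy of $D_t$, and this is the step where care is needed. For a family of subbundles spanned by frames $f_i^t$ whose derivatives lie in $D_t$, one gets $\frac{d}{dt}\rho_t(f_i^t)=0$ from the identity $\frac{d}{dt}\rho_t=\rho_t\dot\rho_t+\dot\rho_t\rho_t$ of Lemma \ref{lemma for differentiating the orthogonal projections in deformations of foliations}. Alternatively, work pointwise in the Grassmannian of $T_pM$: the curve $t\mapsto (D_t)_p$ has vanishing velocity, because the velocity is $\Hom((D_t)_p,T_pM/(D_t)_p)$ applied to $\frac{d}{dt}f_i^t$ mod $D_t$, which is zero. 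A curve with zero velocity in the Grassmannian is constant, so $D_t=F$. The main obstacle I expect is bookkeeping signs and time-dependence in the formula for the derivative of $(\phi_t^{-1})_*$, namely the Lie derivative of a time-dependent pushforward along $\Phi^{t,0}$. Compactness is used only to guarantee that the flow is defined on $I$; without it we would only get triviality on compact subsets or after shrinking $I$.
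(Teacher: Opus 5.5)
Your proof is correct, but it is organized differently from the paper's.

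\textbf{The paper's route.} The paper lifts everything to $M\times I$. It introduces $\widetilde{Y}=Y_t+\partial_t$ and the distribution $D=\widetilde{F}\oplus\langle\widetilde{Y}\rangle$. In the direct implication it shows, via the Jacobian of $\widetilde{\phi}(p,t)=(\phi_t(p),t)$, that $D=\widetilde{\phi}_*(F\oplus\langle\partial_t\rangle)$ is involutive. It then observes that involutivity of $D$ is equivalent to the coboundary condition. For the converse it invokes the general fact that the flow of a section of an involutive distribution preserves that distribution, and reads off $(\Phi^t_Y)_*F=F_t$ by restricting to slices.

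\textbf{Your route.} You stay on $M$ and work with time-dependent pushforwards and pullbacks of frames. In both directions the whole content is the single condition $\frac{d}{dt}e_i^t-[e_i^t,Y_t]\in\Gamma(F_t)$. This is exactly the paper's $[\widetilde{X},\widetilde{Y}]\in\Gamma(\widetilde{F})$ written on $M$, since $[\widetilde{X},Y_t+\partial_t]=[X_t,Y_t]-\dot{X}_t$. Where the paper appeals to flow invariance, you argue by hand: the family $(\phi_t^{-1})_*F_t$ has zero velocity in $\Gr_k(T_pM)$ at every $p$ and every $t$, hence is constant.

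\textbf{What each buys.} Your version makes the sign bookkeeping explicit and avoids the Jacobian computation. It also avoids a step the paper passes over quickly: going from invariance of $D$ to invariance of $\widetilde{F}$. That step uses that the flow of $\widetilde{Y}$ maps slices to slices and that $D\cap T(M\times\{t\})=F_t$. The paper's version is more conceptual: it packages ``coboundary $\Leftrightarrow$ $D$ involutive'' in one stroke. It is also the formulation the author intends to extend to Lie subalgebroids.

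\textbf{A small correction in your projection-based variant.} Differentiating $\rho_t(f_i^t)=f_i^t$ gives $\dot\rho_t(f_i^t)=0$, not $\frac{d}{dt}\bigl(\rho_t(f_i^t)\bigr)=0$. This only shows that $\dot\rho_t$ vanishes on $D_t$. To conclude $\dot\rho_t=0$ you also need that $\dot\rho_t$ is self-adjoint. Then $\dot\rho_t(D_t^\perp)\subset D_t\cap D_t^\perp=0$, using the identity $\dot\rho_t=\rho_t\dot\rho_t+\dot\rho_t\rho_t$. That identity comes from differentiating $\rho_t^2=\rho_t$; it is not stated in the cited Lemma. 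The Grassmannian version already suffices on its own.

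Finally, the replacement $Y_t\mapsto\pi_t^\perp(Y_t)$ is harmless but unnecessary.
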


\begin{proof}
Since $(F_t)_{t\in I}$ is a trivial deformation of $F$, there exists an isotopy $\phi_t:M\to M$ such that $(\phi_t)_*(F)=F_t$. Consider the vector field $\ttilde{Y}=Y_t+\partial_t$ on $M\times I$, where $Y_t\coloneqq\frac{d}{dt}\phi_t$ the time-dependent vector field generated by the isotopy $\phi_t$. The first claim is that the subbundle $D\subset TM\times TI$ given by
\begin{equation}\label{subbundle D in the proof of Moser thm for foliations}
	D_{(p,t)}\coloneqq \ttilde{F}_{(p,t)}\oplus \langle \ttilde{Y}_{(p,t)}\rangle, \quad \text{for any} \ (p,t)\in M\times I,
\end{equation}
is involutive and the second that this induces (is actually equivalent with) equation \eqref{equation for the smooth vanishing condition in moser thm for foliations}. First, consider the diffeomorphism $\ttilde{\phi}:M\times I\to M\times I$ defined by $\ttilde{\phi}(p,t)=(\phi_t(p), t)$. Now, we want to check that $(\ttilde{\phi})_*(F\oplus\langle\partial_t\rangle)=D$. We only need to compute how the Jacobian matrix of $\ttilde{\phi}$, in a local chart $\bigleftpar U, (x_1,\dots,x_n,t)\bigrightpar\subset M\times I$, acts on the local frame $(\partial_{x_1},\dots,\partial_{x_n},\partial_t)$.
\begin{equation}\label{the computation of a pushforward in the moser theorem for foliations}
	\left[
	\begin{array}{cccc|c}
		\frac{\partial \ttilde{\phi}_1}{\partial x_1} & \frac{\partial \ttilde{\phi}_1}{\partial x_2} & \cdots & \frac{\partial \ttilde{\phi}_1}{\partial x_n} & \frac{\partial \ttilde{\phi}_1}{\partial t} \\
		\frac{\partial \widetilde{\phi}_2}{\partial x_1} & \frac{\partial \ttilde{\phi}_2}{\partial x_2} & \cdots & \frac{\partial \ttilde{\phi}_2}{\partial x_n} & \frac{\partial \ttilde{\phi}_2}{\partial t} \\
		\vdots & \vdots & \ddots & \vdots & \vdots \\
		\frac{\partial \widetilde{\phi}_n}{\partial x_1} & \frac{\partial \ttilde{\phi}_n}{\partial x_2} & \cdots & \frac{\partial \ttilde{\phi}_n}{\partial x_n} & \frac{\partial \ttilde{\phi}_n}{\partial t} \\
		\hline
		0 & 0 & \cdots & 0 & 1
	\end{array}
	\right]
	\begin{array}{l}
		\left[
		\begin{array}{c}
			\partial_{x_1} \\
			\partial_{x_2} \\
			\vdots \\
			\partial_{x_n} \\
			\cline{1-1}
			\partial_t
		\end{array}
		\right]
	\end{array}
	=
	\begin{bmatrix}
		\sum\limits_{j=1}^n \frac{\partial \ttilde{\phi}_1}{\partial x_j} \partial_{x_j} \\
		\sum\limits_{j=1}^n \frac{\partial \ttilde{\phi}_2}{\partial x_j} \partial_{x_j} \\
		\vdots \\
		\sum\limits_{j=1}^n \frac{\partial \ttilde{\phi}_n}{\partial x_j} \partial_{x_j} \\
		0
	\end{bmatrix}
	+
	\begin{bmatrix}
		\frac{\partial \ttilde{\phi}_1}{\partial t} \partial_t \\
		\frac{\partial \ttilde{\phi}_2}{\partial t} \partial_t \\
		\vdots \\
		\frac{\partial \ttilde{\phi}_n}{\partial t} \partial_t \\
		0
	\end{bmatrix}
	+
	\begin{bmatrix}
		0 \\
		0 \\
		\vdots \\
		0 \\
		\partial_t
	\end{bmatrix}.
\end{equation}
Hence, we obtain that indeed the equation $(\ttilde{\phi})_*(F\oplus\langle\partial_t\rangle)=(\phi_t)_*(F)+\langle\ttilde{Y}\rangle=D$ holds. Since the subbundle $F\oplus\langle\partial_t\rangle$ of $TM\times TI$ is involutive, as a product of two involutive ones: $F\times\{0\}$ and $\{0\}\times\langle\partial_t\rangle$, and moreover $\ttilde{\phi}$ is a diffeomorphism, the involutivity of $D$ follows directly. Therefore, this means $[\ttilde{X},\ttilde{Y}]\in\Gamma(D)$ for any $\widetilde{X}\in\Gamma(\widetilde{F})$, which induces, for each $t\in I$, that
\begin{equation}\label{equation for the involutivity of subbundle D in deformations of foliations}
	[X_t,Y_t+\partial_t]\in \Gamma(D), \quad \text{where} \ \ttilde{X}=(X_t, 0_t)\in\Gamma(\ttilde{F}).
\end{equation}
First, notice that the vector field $[X_t,Y_t]$ has no $\partial_t$-component, due to the involutivity of $F_t$. Furthermore, the vector field $[X_t,\partial_t]$ also has no $\partial_t$-component. This can be observed by writing $\ttilde{X}=f_i(p,t)e^i_t$ around a point $(p,t)\in M\times I$, where $\{e^i_t\}_{i=1}^k$ is a local frame of $\ttilde{{F}}$, and performing the following quick computation:
\begin{equation*}
	[f_i(p,t)e^i_t,\partial_t]=\frac{\partial f_i(p,t)}{\partial t}e^i_t+f_i(p,t)\frac{\partial e^i_t}{\partial t}\in\got{X}(M).
\end{equation*}
We conclude that $[\ttilde{X},\ttilde{Y}]\in\Gamma(\ttilde{F})$ and so $\ttilde{\pi}([\ttilde{X},\ttilde{Y}])=0$. This is translated, for each $t\in I$, by the equation: $\pi_t^\perp([X_t,Y_t+\partial_t])=0$. Using the fact that $\pi_t(X_t)=X_t$ and $\pi_t^\perp(\frac{d}{dt}\pi_t(X_t))=\frac{d}{dt}\pi_t(X_t)$ (see Lemma \ref{lemma for differentiating the orthogonal projections in deformations of foliations}), we obtain that
\begin{equation}\label{equation that is used to prove that a trivial deformation of a fol gives rise to exact}
	\pi_t^\perp([X_t,Y_t])=\pi_t^\perp([\partial_t, X_t])= \pi_t^\perp([\partial_t,\pi_t(X_t)])=\pi_t^\perp\bigg{(}\frac{d}{dt}\pi_t(X_t)\bigg{)}=\sigma_t(X_t).
\end{equation}
(Notice that in the last part of the proof we showed that the involutivity of the subbundle $D$ \eqref{equation for the involutivity of subbundle D in deformations of foliations} induces \eqref{equation that is used to prove that a trivial deformation of a fol gives rise to exact} but it is immediate that the same proof can be simply reversed and let us conclude that the involutivity of $D$ and \eqref{equation for the smooth vanishing condition in moser thm for foliations} are equivalent.)\\

Conversely, assume there exists $Y\in\Gamma(TM\times 0_I)$ such that $\sigma(\ttilde{X})=d_{\nabla^{\ttilde{F}}}(\ttilde{\pi}(Y))(\ttilde{X})$ for any $\ttilde{X}\in\Gamma(\ttilde{{F}})$. Equivalently, there exists a time-dependent vector field $Y=\{Y_t\}_{t\in I}$ on $M$ such that $\sigma_t(X_t)=\pi_t^\perp([X_t,Y_t])$ for each $t\in I$. We consider again the vector field $\ttilde{Y}=Y_t+\partial_t$ on $M\times I$. As we explained above, the involutivity of the subbundle $D\subset TM\times TI$ given by \eqref{subbundle D in the proof of Moser thm for foliations} is equivalent with the equation \eqref{equation for the smooth vanishing condition in moser thm for foliations}. Since the flow of vector fields tangent to a foliation preserve the leaves, we obtain that $(\phi_{\ttilde{Y}}^t)_*(D)=D$ where $\phi^t_{\ttilde{Y}}:M\times I\to M\times I$ is the flow of $\ttilde{Y}\in\Gamma(D)$ which is defined for all $t\in I$ because $M$ is compact. Recall that, by \eqref{relation between the time-dependent flow and the time-independent}, we have the following relation
\begin{equation}
	\phi_{\ttilde{Y}}^t(p,0)=(\Phi^t_Y(p),t), \quad \text{for any} \ (p,t)\in M\times I,
\end{equation}
where $\Phi_Y^t$ the time-dependent flow of $Y$. Hence, $\phi^t_{\ttilde{Y}}(M\times\{0\})=M\times\{t\}$ which induces that $(\phi^t_{\ttilde{Y}})_*(D|_{{M}\times\{0\}})=D|_{M\times\{t\}}$. Moreover, recall that $D|_{M\times\{t\}}=F_t\oplus\langle Y_t+\partial_t\rangle.$ Now, a completely analogous computation to \eqref{the computation of a pushforward in the moser theorem for foliations}, together with the fact that $(\phi^t_{\ttilde{Y}})_*(Y_0|_{M\times\{0\}})=Y_t|_{M\times\{t\}}$, shows that $$(\phi^t_{\ttilde{Y}})_*(D|_{M\times\{0\}})=(\phi^t_{\ttilde{Y}})_*(F\oplus\langle Y_0+\partial_t|_0\rangle)=(\Phi_Y^t)_*(F)\oplus\langle Y_t+\partial_t \rangle,$$ and thus $\Phi^t_Y$ is the desired isotopy trivializing the deformation $(F_t)_{t\in I}$.
\end{proof}
Note that, in the following lemma, no compactness assumption on the ambient manifold $M$, nor the existence of a complete vector field on it, is imposed. Thus, it recovers the result of \cite[Proposition 2.12]{Heitsch-A-cohomology-for-foliated-manifolds-1975} under a substantially weaker assumption.
\begin{lem}
A trivial deformation $(F_t)_{t\in I}$ of a foliation $F$ gives rise to an exact deformation cocycle $\sigma_0\in\Omega^1(\cali{F}, \nu(\cali{F}))$.
\end{lem}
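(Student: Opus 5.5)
The plan is to read off exactness of $\sigma_0$ directly from the trivializing isotopy at $t=0$, without ever integrating a vector field. Then no compactness or completeness is needed: the isotopy is given, not constructed. By assumption there is an isotopy $(\phi_t)_{t\in I}$ of $M$ with $\phi_0=\id$ and $(\phi_t)_*(F)=F_t$ for all $t\in I$. I set $Y\eqdef\fracddtzs\phi_t\in\got{X}(M)$, which is a genuine vector field on $M$ because $\phi_t$ is defined on all of $M$ for each $t$. The goal is to show
\begin{equation*}
\sigma_0(X)=\pi_0^\perp([X,Y])=\nabla^F_X\overline{Y}=d_{\nabla^F}(\overline{Y})(X),\quad X\in\Gamma(F),
\end{equation*}
so that $\sigma_0=d_{\nabla^F}(\overline{Y})$ is exact.

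The first route is to specialize the direct implication of Theorem \ref{Moser Theorem for foliations} at $t=0$. That argument never used compactness: it only used the diffeomorphism $\ttilde{\phi}(p,t)=(\phi_t(p),t)$, the involutivity of $D=\ttilde{\phi}_*(F\oplus\langle\partial_t\rangle)$, and the identity \eqref{equation that is used to prove that a trivial deformation of a fol gives rise to exact}. Evaluating that identity at $t=0$, where $Y_0=Y$ and $\pi_0=\pi$, gives exactly the display above. In the write-up I will make this self-contained with an elementary pointwise argument rather than the global one.

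The elementary argument goes as follows. Fix $X\in\Gamma(F)$ and put $X_t\eqdef(\phi_t)_*X$. This is a smooth family of sections with $X_t\in\Gamma(F_t)$ and $X_0=X$. Differentiating $\pi_t(X_t)=X_t$ at $t=0$ gives
\begin{equation*}
\sigma_0(X)+\pi_0\big(\tfrac{d}{dt}\big|_{t=0}X_t\big)=\tfrac{d}{dt}\big|_{t=0}X_t .
\end{equation*}
Applying $\pi_0^\perp$ and using Lemma \ref{lemma for differentiating the orthogonal projections in deformations of foliations}(1), which says $\sigma_0(X)$ lies in $\nu(\cali{F})$, yields
\begin{equation*}
\sigma_0(X)=\pi_0^\perp\big(\tfrac{d}{dt}\big|_{t=0}(\phi_t)_*X\big).
\end{equation*}
The remaining step is the standard identity $\tfrac{d}{dt}\big|_{t=0}(\phi_t)_*X=-\mathcal{L}_YX=[X,Y]$, valid for any isotopy with $\phi_0=\id$ and velocity $Y$ at $t=0$, not only for flows. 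Hence $\sigma_0(X)=\pi_0^\perp[X,Y]$, as required.

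The main obstacle is this derivative of the pushforward, together with its sign. I will prove it locally: it is a first-order computation in $t$ and needs $\phi_t$ only near a point for small $t$, which the isotopy provides on all of $M$. Concretely, I write $\phi_t(p)=p+tY(p)+O(t^2)$ in a chart and compute $(\phi_t)_*X$ at $\phi_t(p)$ versus at $p$ to first order. The sign only affects the representative, since $d_{\nabla^F}(\overline{-Y})=-d_{\nabla^F}(\overline{Y})$, so exactness holds either way. I should also check that $\sigma_0$ is tensorial in $X$, so that testing it on sections of the form $X$ suffices.
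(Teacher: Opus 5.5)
Your proof is correct, but the argument you plan to write up takes a different route from the paper's. The paper sets $t=0$ in \eqref{equation that is used to prove that a trivial deformation of a fol gives rise to exact}, so it works on $M\times I$. It pushes the involutive distribution $F\oplus\langle\partial_t\rangle$ forward by $\ttilde{\phi}(p,t)=(\phi_t(p),t)$ to get the involutive $D=\ttilde{F}\oplus\langle Y_t+\partial_t\rangle$. It then reads off $\sigma_t(X_t)=\pi_t^\perp[X_t,Y_t]$ from the fact that $[\ttilde{X},Y_t+\partial_t]$ has no $\partial_t$-component and therefore lies in $\ttilde{F}$.

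You stay on $M$ instead. You transport $X\in\Gamma(F)$ to $(\phi_t)_*X\in\Gamma(F_t)$ and differentiate $\pi_t\bigl((\phi_t)_*X\bigr)=(\phi_t)_*X$ at $t=0$. With $\dot X_0\eqdef\tfrac{d}{dt}\big|_{t=0}(\phi_t)_*X$, this gives $\sigma_0(X)=(\id-\pi_0)(\dot X_0)=\pi_0^\perp(\dot X_0)$. The first-order identity $\dot X_0=[X,Y]$ then finishes the proof; your sign is right, since in coordinates the derivative is $DY\cdot X-DX\cdot Y$.

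Two of your precautions are unnecessary. The displayed identity already yields $\pi_0^\perp(\dot X_0)$, so Lemma~\ref{lemma for differentiating the orthogonal projections in deformations of foliations}(1) is not needed; your computation in fact reproves it. No tensoriality check is needed either, because the identity holds for every $X\in\Gamma(F)$.

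What your route buys is transparency about why neither compactness nor a complete vector field enters. Only the $1$-jet of the isotopy at $t=0$ is used, and the computation is local. What the paper's route buys is the whole family $\sigma_t=d_{\nabla^{F_t}}\bigl(\pi_t^\perp(Y_t)\bigr)$ in one stroke. It does this through the same involutivity of $D$ that is reversed in the converse of Theorem~\ref{Moser Theorem for foliations}, which is the framework the author wants to extend to Lie subalgebroids. Your argument also gives every $t$ if you apply it to the isotopy $s\mapsto\phi_s\circ\phi_t^{-1}$ and differentiate at $s=t$.
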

\begin{proof}
We need to check that there exists $Y\in\got{X}(M)$ such that $\sigma_0(X)=\pi_0^\perp[X,Y]$, for any $X\in\Gamma(F)$. This follows directly from the proof of Theorem \ref{Moser Theorem for foliations}. We consider $t=0$ in equation \eqref{equation that is used to prove that a trivial deformation of a fol gives rise to exact}.
\end{proof}

\begin{rem}
The proof of the Moser trick for Lie algebroids \cite[Theorem~2.]{Crainic-Moerdijk-Deformations-08'} shares many similarities with the proof that gauge equivalences of Lie algebroids correspond to their geometric equivalences \cite[Proposition~1.1.5.]{LaPastina-Vitagliano-deformations-VB-algebroids-2019}. Analogously, Theorem~\ref{Moser Theorem for foliations} is closely related to \cite[Theorem~2.6]{Zambon-Schaetz-Gaugeequivalences-for-foliations-2021}. It would be interesting to clarify the precise connection between the deformation-theoretic Moser trick and the equivalence between gauge and geometric equivalences. This more subtle link is left for future investigation.
\end{rem}

\appendix

	\section{On curves in the Grassmannian}\label{appendix on curves in Grassmannian}
Let $G$ be a finite dimensional real Lie group. Spaces which carry a transitive Lie group $G$-action are called homogeneous $G$-spaces. If $V$ is a finite dimensional real vector space, its Grassmannian of $k$-dimensional vector subspaces $\Gr_k(V)\coloneqq\{W\subset V : \dim W=k\}$ is a homogeneous $\GL(V)$-space and the action is given just by the matrix multiplication (upon choosing a basis for $V$). Transitivity can be seen in the following way: given bases for two subspaces $W_1$ and $W_2$ of $V$, we can extend them to bases for $V$ and then the linear map which sends one basis to the other, sends also $W_1$ to $W_2$. By the theory of homogeneous spaces, we know that these spaces are quotients of Lie groups modulo closed subgroups. Specifically, for a homogeneous $G$-space $X$, let $x\in X$, we have the following diffeomorphism
\begin{equation*}
	G/G_x\simeq X, \quad \overline{g}\mto gx
\end{equation*}
where $G_x=\{g\in G| \ gx=x\}$ is the isotropy group at the point $x$. Hence, translating the above diffeomorphism in the case of Grassmannians we obtain that $$\Gr_k(V)\simeq \GL(V)/{\GL(V)_W}$$ for any $W\in \Gr_k(V)$ and $\GL(V)_W$ the isotropy group of $W$. Applying the tangent functor to the above diffeomorphism, we get the following vector space isomorphism
\begin{equation*}\label{isomorphism for tangent of Grassmannian}
	T_W(\Gr_k(V))\simeq T_{[\Id]}(\GL(V)/\GL(V)_W.
\end{equation*}
We can make the above isomorphism a bit more specific:
\begin{equation}\label{vector space isomorphism ofr Grassmannians}
	T_{[\Id]}(\GL(V)/\GL(W; V))\simeq \got{gl}(V)/\got{gl}(V)_W
\end{equation}
where $\got{gl}(V)_W=\{B\in\got{gl}(V)| \ BW=W\}$ is the Lie algebra corresponding to $\GL(V,W)$. Denote by $\Pi:\GL(V)\to\GL(V)/\GL(V, W)$ the canonical projection and consider its tangent map at the identity:$$T_{\Id}\Pi:\got{gl}(V)\to T_{[\Id]}\bigleftpar\GL(V)/\GL(V)_W\bigrightpar.$$Hence, $T_{\Id}(\Pi^{-1}(\overline{0}))=T_{\Id}\GL(V)_W=\ker(T_{\Id}\Pi)=\got{gl}(V)_W$.

By the first isomorphism theorem, we obtain \eqref{vector space isomorphism ofr Grassmannians}. 
Applying, once again, the first isomorphism theorem to the map $\pi_{V/W}\circ \res:\Hom(V, V)\to\Hom(W, V/W)$, where $\pi_{V/W}\circ\res$ is just the restriction map $\res:\Hom(V, V)\to\Hom(W, V)$ followed by the canonical projection $\pi_{V/W}:V\to V/W$, we get that $\got{gl}(V)/\got{gl}(V)_W\simeq W^*\otimes V/W$. Summing up, we conclude that
\begin{equation}\label{canonical identification of the tangent space of the Grassmannian}
	T_W(\Gr_k(V))\simeq W^*\otimes V/W.
\end{equation}
It will be useful, for our purposes, to interpret the above isomorphism in terms of curves. The isotropy group $G_x$ at any $x\in X$ is a closed subgroup of $G$ and acts on $G$ just by the group multiplication. By quotient manifold theorem (\cite{Lee-Smooth-manifolds-2013}, Thm. 21.10), we can conclude that $G\stackrel{\Pi}{\to} G/G_x$ is a $(G_x)$-principal bundle. Into the context of Grassmannians, we get that the canonical projection$$\GL(V)\stackrel{\Pi}{\to}\GL(V)/\GL(V)_W$$is a $\GL(V)_W$-principal bundle. Let $J\subset\mbb{R}$ be an open interval containing the origin, and $\gamma:J\to \Gr_k(V)$ be a smooth curve in $\Gr_k(V)$, with $\gamma(0)=W$. We denote its value $\gamma(t)$ by $W_t$.
\begin{equation}\label{diagram for the existence of a curve of isomorphisms sending initial subalgebra to its deformation}
	\begin{tikzcd}
		& \GL(V) \arrow[d, "\Pi"]\\
		J \ar[ru, dashed, "\alpha"] \arrow[r, swap, "\gamma"]& \Gr_k(V)\simeq \GL(V)/{\GL(V)_W} 
	\end{tikzcd}
\end{equation}
By local triviality of principal bundles, there exists an open subinterval $I\subseteq J$ containing zero, and a curve $\alpha:I\to \GL(V)$, denoted by $\alpha(t)\eqqcolon \alpha_t$, such that $\Pi\circ\alpha_t=W_t$ for each $t\in I$. Equivalently, the last condition means that $\alpha_0=\Id$ and $\alpha_t(W)=W_t$ for each $t\in I$ . Differentiating at $t=0$, we get the equation $\fracddtzs{W_t}=T_{\Id}\Pi\circ\fracddtzs{\alpha_t}$. Using the isomorphism \eqref{vector space isomorphism ofr Grassmannians}, we conclude that  the interpretation of \eqref{canonical identification of the tangent space of the Grassmannian} via curves is given by:$$\fracddtz{W_t}\mto \pi_{V/W}\circ\fracddtz{\alpha_t}.$$
We have just explained in detail that, given $W\in\Gr_k(\mathfrak{g})$ and a smooth curve $(W_t)_{t\in J}\subset\Gr_k(\mathfrak{g})$ with $W_0=W$, there exists a smooth curve $(\alpha_t)_{t\in I}\subset\mathrm{GL}(\mathfrak{g})$, where $0\in I\subseteq J$, such that $\alpha_0=\Id$ and $\alpha_t(W)=W_t$, for every $t\in I$.
Therefore, for each $t\in I$, we obtain the following commutative diagram.
\begin{equation}\label{commutative diagram in the proof of the cocycle in Ad-submodules}
	\begin{tikzcd}
		\got{g}\arrow[r,"\alpha_t"] \arrow[d,swap,"\pi_{\got{g}/\tiny W}"] &
		\got{g} \arrow[d,"\pi_{\got{g}/W_t}"]\\
		\got{g}/W \arrow[r,"\overline{\alpha_t}"] & \got{g}/{W_t}
	\end{tikzcd},
\end{equation}
where the vertical arrows denote the corresponding canonical projections.\\

Let $(\got{h},\mu_{\got{h}})\subset (\got{g},\mu_{\got{g}})$ be a Lie subalgebra, and let $\iota_{\got{h}}:\got{h}\hookrightarrow \got{g}$ denote its natural inclusion. Consider a smooth curve $(\got{h}_t)_{t\in J}\subset \Gr_k(\got{g})$ such that $\got{h}_0=\got{h}$ and, for every $t\in J$, $\got{h}_t$ is a Lie subalgebra of $\got{g}$. For each $t\in J$, denote by $\mu_{\got{h}_t}\coloneqq \mu_{\got{g}}|_{\wedge^2\got{h}_t}$ the restriction of the ambient Lie bracket to $\got{h}_t$, and let $\iota_{\got{h}_t}:(\got{h}_t,\mu_{\got{h}_t})\hookrightarrow (\got{g},\mu_{\got{g}})$ be the corresponding natural inclusions.

From our previous discussion, possibly after shrinking to an interval $I\subseteq J$, $\got{h}_t=\alpha_t(\got{h})$ for each $t\in I$, for some $(\alpha_t)_{t\in I}\subset\GL(\got{g})$ with $\alpha_0=\id_{\got{g}}$. The following formula defines a smooth deformation of the Lie bracket $\mu_{\got{h}}$:
\begin{equation}\label{induced deformation of the lie bracket of the lie algebra}
	\mu_{\got{h}}^t(u,v)\eqdef \alpha_t^{-1}\circ \mu_{\got{g}}\bigleftpar \alpha_t(u),\alpha_t(v)\bigrightpar, \quad \text{for all } t\in I \text{ and } u,v\in\got{h},
\end{equation}
such that $\alpha_t^{-1}:(\got{h}_t,\mu_{\got{h}_t})\stackrel{\simeq}{\to}(\got{h},\mu_{\got{h}}^t)$ is a Lie algebra isomorphism for every $t\in I$. This construction naturally comes with a smooth family of injective Lie algebra homomorphisms $\iota_t:(\got{h},\mu_{\got{h}}^t)\hookrightarrow(\got{g},\mu_{\got{g}})$ defined by $\iota_t\eqdef\iota_{\got{h}_t}\circ\alpha_t|_{\got{h}}$. The Grassmannian point of view is recovered by $\bigleftpar\text{Im}(\iota_t)\bigrightpar_{t\in I}\subset\Gr_k(\got{g})$.\\

In the special case where $\got{h}\subset\got{g}$ is a Lie ideal, there is, in addition, a smooth family of surjective Lie algebra homomorphisms$$\pi_t:(\got{g},\mu_{\got{g}})\twoheadrightarrow(\got{g}/\got{h},\mu^t_{\got{g}/\got{h}}),$$ defined by $\pi_t\eqdef\overline{\alpha_t^{-1}}\circ\pi_{\got{g}/\got{h}_t}\stackrel{(\ref{diagram for the existence of a curve of isomorphisms sending initial subalgebra to its deformation})}{=}\pi_{\got{g}/\got{h}}\circ\alpha_t^{-1}$, where
\begin{equation*}
		\mu_{\got{g}/\got{h}}^t(\overline{x},\overline{y})\eqdef\overline{\alpha_t^{-1}}\circ \mu_{\got{g}}\bigleftpar \overline{\alpha_t}(x),\overline{\alpha_t}(y)\bigrightpar, \quad \text{for all } t\in I \text{ and } \overline{x},\overline{y}\in\got{g}/\got{h}.
\end{equation*}
Thus, we obtain for each $t\in I$ the following commutative diagram of short exact sequences of Lie algebras.
\[
\begin{tikzcd}
	0 \arrow[r] &
	(\got{h}_t,\mu_{\got{h}_t}) \arrow[r, "\iota_{\got{h}_t}"] \arrow[d, "\alpha^{-1}_t"] \arrow[d, swap, "\rotatebox{-90}{$\simeq$}"] &
	\got{g} \arrow[r, "\pi_{\got{g}/\got{h}_t}"] \arrow[d, "\Id_{\got{g}}"] &
	(\got{g}/{\got{h}_t},\mu_{\got{g}/\got{h}_t}) \arrow[r] \arrow[d, swap,
	"\overline{\alpha^{-1}_t}"'] \arrow[d, swap, "\rotatebox{-90}{$\simeq$}"]
	&
	0 \\
	0 \arrow[r] &
	(\got{h},\mu_{\got{h}}^t) \arrow[r, "\iota_t"] &
	\got{g} \arrow[r, "\pi_t"] &
	(\got{g}/{\got{h}},\mu_{\got{g}/\got{h}}^t) \arrow[r] &
	0.
\end{tikzcd}
\]
For an ideal, the Grassmannian point of view is equivalently described by the smooth family $\bigl(\ker(\pi_t)\bigr)_{t\in I}\subset\Gr_k(\got{g})$.\\

\textbf{Conclusion}: As is common in deformation theory, the present manuscript is concerned not with arbitrarily large deformations, but rather with sufficiently small ones. This appendix therefore shows that the definitions of deformations of Lie subalgebras and Lie ideals in terms of curves in the Grassmannian are locally equivalent to definitions (C) and (D) introduced in Section \ref{second section of the paper deformations of foliations}, respectively, and are therefore sufficient for our purposes.

\section{On Time-Dependent Vector Fields}\label{appendix}

We briefly recall a few basics on time-dependent vector fields, which can be found, e.g., in \cite{Jafarpour-Lewis-book-Time-varying-vector-fields-and-their-flows-2014}.

A time-dependent vector field is a smooth family of vector fields ${X}=\{X_t\}_{t\in I}$, where $I\subset\mathbb{R}$ is an open interval containing the origin. Specifically, it is a smooth map $${X}:I\times M\to TM, \quad {X}(t,p)\eqqcolon X_t(p)\in T_pM.$$

The flow $\Phi^{t,s}_{{X}}$ of a time-dependent vector field ${X}$, consists of local diffeomorphisms $\Phi^{t,s}_{{X}}:M\to M$, which depend both on the parameters $t$ and $s$, determined by
\begin{equation}\label{defining relations for the time-dependent flow}
	\frac{d}{dt}\Phi^{t,s}_{{X}}(p)=X_t(\Phi^{t,s}_{{X}}(p)), \quad \Phi^{s,s}_{{X}}(p)=p.
\end{equation}
Moreover,  it satisfies the property $\Phi^{t,u}_{{X}}\circ\Phi^{u,s}_{{X}}=\Phi^{t,s}_{{X}}$ by taking care of the values of the flow domain analogously to the time-independent case. The time-dependent vector field ${X}=\{X_t\}$, can be seen as time-independent vector field $\widetilde{X}$ on $M\times I$ as follows $$\widetilde{X}(t,p)=X_t(p)+\partial_t.$$ The flow of $\widetilde{X}$ and the (time-dependent) flow of $X=\{X_t\}$ are related by the following relation
\begin{equation}\label{relation between the time-dependent flow and the time-independent}
	\phi_{\widetilde{X}}^t(p,s)=(\Phi^{t+s,s}_{X}(p), t+s).
\end{equation}
In the context of deformations, we are mainly interested in what happens for parameters around $0\in I$ and so it is enough to study the family of local diffeomorphisms $$\Phi^t_{{X}}\coloneqq\Phi^{t,0}_{{X}}.$$
\begin{defin}\label{defin of isotopy}
A smooth one parameter family of (global) diffeomorphisms $(\phi_t)_{t\in I}\in\text{Diff}(M)$ such that $\phi_0=\id$, is called an \emph{isotopy} of $M$.
\end{defin}
Every isotopy $\phi_t:M\to M$ gives rise to a time-dependent vector field ${Y}=\{Y_t\}_{t\in I}$ defined, for each $p\in M$, by
\begin{equation}\label{time-dependent vector field associated to an isotopy}
Y_t(\phi_t(p))=\frac{d}{dt}\phi_t(p)
\end{equation}
For the sake of simplicity, we use the notation $Y_t=\frac{d}{dt}\phi_t$. Its corresponding (time-dependent) flow is given by
\begin{equation}\label{time-dependent flow associated to an isotopy}
\Phi_{{Y}}:I\times I\times M\to M, \quad \Phi_{{Y}}^{t,s}(p)=\phi_t\circ\phi_{s}^{-1}(p),
\end{equation}
which directly can be checked by the relations in \eqref{defining relations for the time-dependent flow}.\\

\bibliographystyle{plain} % or your preferred style
\bibliography{Mosertrick} % replace with your .bib filename

\end{document}